\documentclass[11pt,a4paper]{article}

\usepackage[T1]{fontenc}
\usepackage[utf8]{inputenc}
\usepackage{lmodern}
\usepackage{microtype}

\usepackage{amsmath,amssymb,amsthm,mathtools}

\usepackage{enumitem}
\usepackage{xcolor}
\usepackage{geometry}
\usepackage{aliascnt}

\usepackage{hyperref}
\usepackage[nameinlink,capitalise,noabbrev]{cleveref}

\hypersetup{
  colorlinks=true,
  linkcolor=black,
  citecolor=black,
  urlcolor=black,
  pdftitle={The class SJ is not countably recognizable},
  pdfauthor={M. Brescia, B.G. Di Siena, E. Ingrosso, M. Trombetti}
}

\usepackage{refcheck}

\makeatletter

\AtBeginDocument{%

  \let\RC@cref\cref
  \renewcommand{\cref}[1]{%
    \@for\RC@label:=#1\do{%
      \wrtusdrf{\RC@label}%
    }%
    \RC@cref{#1}%
  }%

  \let\RC@Cref\Cref
  \renewcommand{\Cref}[1]{%
    \@for\RC@label:=#1\do{%
      \wrtusdrf{\RC@label}%
    }%
    \RC@Cref{#1}%
  }%

  \let\RC@crefrange\crefrange
  \renewcommand{\crefrange}[2]{%
    \wrtusdrf{#1}%
    \wrtusdrf{#2}%
    \RC@crefrange{#1}{#2}%
  }%

  \let\RC@Crefrange\Crefrange
  \renewcommand{\Crefrange}[2]{%
    \wrtusdrf{#1}%
    \wrtusdrf{#2}%
    \RC@Crefrange{#1}{#2}%
  }%
}

\makeatother

\newtheorem{theorem}{Theorem}[section]
\newtheorem*{maintheorem}{Main Theorem}

\newaliascnt{lemma}{theorem}
\newtheorem{lemma}[lemma]{Lemma}
\aliascntresetthe{lemma}

\newaliascnt{proposition}{theorem}
\newtheorem{proposition}[proposition]{Proposition}
\aliascntresetthe{proposition}

\newaliascnt{corollary}{theorem}
\newtheorem{corollary}[corollary]{Corollary}
\aliascntresetthe{corollary}

\newaliascnt{claim}{theorem}

\aliascntresetthe{claim}

\theoremstyle{definition}

\newaliascnt{definition}{theorem}
\newtheorem{definition}[definition]{Definition}
\aliascntresetthe{definition}

\newaliascnt{remark}{theorem}

\aliascntresetthe{remark}

\newaliascnt{convention}{theorem}

\aliascntresetthe{convention}

\crefname{theorem}{Theorem}{Theorems}
\Crefname{theorem}{Theorem}{Theorems}

\crefname{lemma}{Lemma}{Lemmas}
\Crefname{lemma}{Lemma}{Lemmas}

\crefname{proposition}{Proposition}{Propositions}
\Crefname{proposition}{Proposition}{Propositions}

\crefname{corollary}{Corollary}{Corollaries}
\Crefname{corollary}{Corollary}{Corollaries}

\crefname{claim}{Claim}{Claims}
\Crefname{claim}{Claim}{Claims}

\crefname{definition}{Definition}{Definitions}
\Crefname{definition}{Definition}{Definitions}

\crefname{remark}{Remark}{Remarks}
\Crefname{remark}{Remark}{Remarks}

\crefname{convention}{Convention}{Conventions}
\Crefname{convention}{Convention}{Conventions}

\newcommand{\SJ}{\operatorname{SJ}}
\newcommand{\Hclass}{\mathcal H}

\newcommand{\N}{\mathbb N}
\newcommand{\defect}{\operatorname{d}}
\newcommand{\sn}{\mathrel{\trianglelefteq\trianglelefteq}}

\title{The group class $\SJ$ is not countably recognizable}
\author{M. Brescia --- B.G. Di Siena --- E. Ingrosso --- M. Trombetti}
\date{}

\begin{document}
\maketitle

\begin{abstract}
A group is an {\it $\SJ$-group} if it admits a general abelian series all of whose terms are subnormal of finite defect in the whole group.  We construct a group $G$ which is not an~\hbox{$\SJ$-group}, although every countable subgroup of $G$ is an $\SJ$-group.
\end{abstract}

\section{Introduction}

A group class $\mathfrak X$ is said to be \emph{countably recognizable} if every group all of whose countable subgroups belong to $\mathfrak X$ is itself an $\mathfrak X$-group. Countable recognizability is therefore a local-to-global principle at the first infinite cardinal: it allows one to detect a property of an arbitrary group, possibly uncountable, from the structure of its countable subgroups. The point is especially significant for classes which are not local in the usual sense. Solubility and nilpotency, for instance, are countably recognizable although neither class is local. More generally, countable recognizability has proved useful whenever the property under consideration is expressed by series, rank conditions, residual conditions, or restrictions on large families of subgroups (see \cite{Baer1962,deGiovanniTrombetti2021,Neumann1973,Phillips1971} and the references therein).

The first examples predate the terminology. \v{C}ernikov proved that hypercentrality can be detected on countable subgroups \cite{Chernikov1950}, and Baer obtained the analogous result for hyperabelian groups \cite{Baer1955}. Baer then initiated the systematic study of countably recognizable group-theoretic properties in \cite{Baer1962}. Subsequent work showed that the phenomenon is not confined to generalized soluble groups. In the finitary setting, Hall, Wehrfritz, and Leinen--Puglisi obtained recognition theorems and related local-system results for large finitary linear and permutation groups \cite{Hall1988,LeinenPuglisi1997,LeinenPuglisi2000,Wehrfritz1997}. These results already illustrate one of the main uses of the subject: a group of arbitrary cardinality can often be controlled by reducing the problem to suitable countable subgroups.

A particularly substantial line of work concerns rank conditions. Dixon, Evans and Smith developed a structural theory for locally soluble-by-finite groups whose proper subgroups, or whose subgroups of infinite rank, satisfy strong finite-rank restrictions; see for example \cite{DixonEvansSmith2000,DixonEvansSmith2005}. In this context they later proved that soluble-by-(finite rank) groups form a countably recognizable class, with analogous results for nilpotent-by-(finite rank) groups \cite{DixonEvansSmith2007}. Smith subsequently extended these methods to classes which are (locally soluble)-by-finite rank or finite rank-by-(locally soluble) \cite{Smith2009}. Thus countable recognizability is closely tied to the general programme of recovering global information from restrictions on subgroups of infinite rank.

There is a parallel and now extensive theory of genuinely uncountable groups in which hypotheses are imposed on subgroups having the same cardinality as the ambient group. Among the problems considered are normality, permutability, transitive normality and pronormality of large subgroups (see \cite{DeFalcoDeGiovanniHeinekenMusella2017,DeFalcoEvansDeGiovanniMusella2018,DeFalcoDeGiovanniMusellaSysak2019,DeFalcoDeGiovanniMusella2021,DeFalcoDeGiovanniMusella2025}). Although these ``large-subgroup'' problems are formally different from countable recognizability, the two methods interact naturally: if a class is countably recognizable, failure of the property is already witnessed inside a countable subgroup, and that witness can often be incorporated into a suitable proper subgroup of full cardinality. An explicit instance occurs in \cite{DeFalcoMusellaZaccardo2025}, whose arguments use the countable recognizability of extended residual finiteness (ERF), established in \cite{deGiovanniTrombetti2018Residual}, to pass between countable subgroups and the uncountable ambient group. This illustrates why countable recognizability is not merely a classification question, but a working tool in the structure theory of large groups.

The property is nevertheless far from automatic. Higman's work on almost free groups yields the classical failure of countable recognizability for free groups \cite{Higman1951}. In a different direction, Hickin and Phillips constructed an uncountable locally finite $p$-group showing that an ascending-series condition may hold on all countable subgroups and fail globally~\cite{HickinPhillips1970}. Finitary linearity also exhibits both positive recognition results in restricted settings and negative phenomena in general; compare \cite{LeinenPuglisi1997,LeinenPuglisi2000,Wehrfritz1997}. These examples show that countable recognizability is a genuine structural property of a class, rather than a formal consequence of its definition.

We consider here the class $\SJ$ of groups possessing a general abelian series whose terms are subnormal, with finite defect, in the ambient group. This is one of the classical classes of generalized soluble groups in the sense of Robinson \cite{RobinsonFGSG2}. The question whether $\SJ$ is countably recognizable was posed explicitly as Problem ADV--2B in 2016 \cite{ADV2016}; it was still singled out in the 2021 survey on countable recognizability as the remaining open problem among the principal generalized soluble classes \cite{deGiovanniTrombetti2021}. The problem also appears in the AGTA Open Problems collection, where a prize of EUR~1000 is offered for a solution \cite{AGTAOpenProblems}. We give a negative answer.

\begin{maintheorem}
There exists a group $G$ such that every countable subgroup of $G$ belongs to~$\SJ$, whereas $G\notin\SJ$. Consequently the class $\SJ$ is not countably recognizable.
\end{maintheorem}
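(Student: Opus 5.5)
The plan is to build $G$ as a semidirect product $G=A\rtimes H$ (or a restricted direct product of such pieces) indexed by an uncountable set, in such a way that countable pieces have \emph{bounded} subnormal defects while the whole group forces the defects to be unbounded. The guiding fact is this. If $G$ has a general abelian series with all terms subnormal of finite defect, then every finitely generated subgroup, and indeed every countable subgroup $C$, meets only subnormal subgroups of $G$. However, there is no uniform bound on the defects, because the series may have uncountably many terms. We will exploit a diagonalization over $\omega_1$, in the spirit of Hickin--Phillips \cite{HickinPhillips1970}. Concretely, we take $G$ to be a locally finite $p$-group, or a metabelian-by-abelian extension, of the form $G=\langle g_\alpha,\ \alpha<\omega_1\rangle$ with relations arranged so that for every $\beta<\omega_1$ the subgroup $G_\beta=\langle g_\alpha : \alpha<\beta\rangle$ is an $\SJ$-group. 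To do this we will build an explicit series in $G_\beta$, using the countability of $\beta$ to enumerate it in order type $\omega$ and assign defects that grow along the enumeration.

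First, I will define $G$ as a direct limit $G=\bigcup_{\beta<\omega_1}G_\beta$ of countable groups, constructed by transfinite recursion. At successor steps, $G_{\beta+1}=G_\beta\ltimes$ (or wreath-type extension by) a new abelian factor. At limit steps we take unions, with the property that each $G_\beta$ carries a general abelian series $\Sigma_\beta$ of subnormal subgroups of finite defect. Second, I will show that every countable subgroup lies in some $G_\beta$, by regularity of $\omega_1$. Since $\SJ$ is subgroup closed (intersecting the series with a subgroup preserves subnormality of finite defect and abelian factors), this gives the first half of the Main Theorem. Third, I will prove $G\notin\SJ$. Suppose $\Sigma$ is an $\SJ$-series of $G$. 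For each $\alpha$, consider the jump of $\Sigma$ containing $g_\alpha$ (or the relevant element $x_\alpha$), and let $n_\alpha$ be the defect of the lower term. The map $\alpha\mapsto n_\alpha$ takes values in $\N$, so by the pigeonhole principle some fixed $n$ occurs for uncountably (stationarily) many $\alpha$. Combining this with a Fodor-type pressing-down argument on the recursion, we get an uncountable family of elements whose normal closures are forced, by construction, to have subnormal defect $>n$ in $G$. This is a contradiction.

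The main obstacle is the recursive construction itself. We must design the extensions so that the defect needed by any subnormal term separating $g_\alpha$ from the earlier generators increases with the ``height'' of $\alpha$ in a coherent tree-like structure, forcing unboundedness on any uncountable set. At the same time, countable stages must be able to realize the series by reassigning defects. I would first try an Aronszajn-tree–indexed construction: index generators by nodes of an $\omega_1$-Aronszajn tree $T$ and let $x_t$ commute-act on $x_s$ by iterated commutators of length equal to the level difference. The reason is that countable subtrees of $T$ embed into $\mathbb Q$ (so defects can be chosen by an order-preserving map to a well-behaved countable structure), while the whole tree admits no such embedding. Verifying exactly that this embedding dichotomy translates into ``series with finite defects exists iff order-embedding exists'' is the step I expect to be hardest.
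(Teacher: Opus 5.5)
Your outline has the right combinatorial skeleton, and it is also the paper's: pigeonhole on the defects of the jumps covering uncountably many generators, and, for countable pieces, an $\omega$-enumeration along which the permitted defects grow. But the group-theoretic mechanism that makes either half true is missing, and the concrete choices you suggest cannot supply it. Both ambient classes you name already lie in $\SJ$. A metabelian-by-abelian group is soluble. A locally finite $p$-group is locally nilpotent, hence has a general central series, i.e.\ a general abelian series of normal subgroups. For the same reason, relations that only make iterated commutators of prescribed length non-trivial cannot obstruct an $\SJ$-series.

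What is needed is \emph{perfection}. Suppose $\langle x,y\rangle^{G,n}$ is perfect and $x,y$ are covered by jumps whose \emph{upper} terms have defect at most $n$. Let $(A,B)$ be the higher jump. Then $B\supseteq\langle x,y\rangle^{G,n}$, so $B'\supseteq\langle x,y\rangle^{G,n}$ by perfection, and therefore $A\supseteq\langle x,y\rangle^{G,n}$. This contradicts the fact that one of $x,y$ lies outside $A$. So bounding the defects of the lower terms, as you propose, is the wrong invariant.

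The paper realizes this obstruction with groups $A_n$ of finite normal height, built from McLain groups via the Kov\'acs--Neumann/Hall wreath powers. Each $A_n$ contains commuting involutions $x,y$ with $\langle x,y\rangle^{A_n,j}$ perfect for $j\le n$. A copy of depth $n(\alpha,\beta)=e_\beta(\alpha)+1$, for injections $e_\beta:\beta\to\N$, is amalgamated over $\langle g_\alpha,g_\beta\rangle$. With this colouring plain pigeonhole gives $G\notin\SJ$, and no Fodor argument is needed.

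Your Aronszajn-tree dichotomy is moreover not available in ZFC. Special Aronszajn trees exist and do map strictly increasingly into $\mathbb Q$, and under $\mathrm{MA}_{\omega_1}$ every Aronszajn tree is special. The ordinal $\omega_1$ itself is what carries the required dichotomy.

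The countable half, where most of the paper's work lies, is left open by ``build an explicit series in $G_\beta$''. For it to be possible at all, the gadget must be asymmetric: deep for a pair of generators, shallow for a single one. In the paper $\langle x\rangle^{A_n}$ is not perfect, and the distinct images of the conjugates of $x$ in its abelianization are non-zero and linearly independent.

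This asymmetry is what permits killing the generators $g_{s_0},g_{s_1},\dots$ one at a time. That produces normal subgroups $R_m$ of $G_S$, and the quotients $R_{m+1}/R_m$ are shown to lie in $\SJ$ via Fischer's subgroup theorem for tree products and an inductive theorem on admissible amalgams. The only deep factors in $R_{m+1}/R_m$ come from the finitely many pairs $(s_i,s_m)$ with $i<m$. This is exactly where the finiteness of predecessors in an $\omega$-enumeration enters.

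Your proposal identifies no construction with both properties at once. As it stands, neither half of the Main Theorem is established.
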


We briefly describe the construction and the two mechanisms behind the proof. If $K\leq G$, write $K^G$ for the normal closure of $K$ in $G$ and put

$$
K^{G,0}=G,\qquad K^{G,1}=K^G,\qquad K^{G,n+1}=K^{\,K^{G,n}}.
$$

Thus $K^{G,n}$ is the least subnormal subgroup of $G$ of defect at most $n$ containing $K$. For every positive integer $r$ we construct an $\SJ$-group $A_r$ containing commuting involutions $x_r,y_r$ such that $\langle x_r,y_r\rangle^{A_r,j}$ is perfect for $1\leq j\leq r$. We then take

$$
E=\bigoplus_{\alpha<\omega_1}\langle g_\alpha\rangle,\qquad |g_\alpha|=2.
$$

For every $\beta<\omega_1$ choose an injection $e_\beta:\beta\to\mathbb N$, and for $\alpha<\beta$ put

$$
n(\alpha,\beta)=e_\beta(\alpha)+1.
$$

For each pair $\alpha<\beta$ we take a copy $A_{\alpha\beta}$ of $A_{n(\alpha,\beta)}$ and identify its two chosen involutions with $g_\alpha$ and $g_\beta$. The resulting group $G$ therefore satisfies that, for every $1\leq j\leq n(\alpha,\beta)$, $\langle g_\alpha,g_\beta\rangle^{A_{\alpha\beta},j}$ is perfect.

Suppose, towards a contradiction, that \(G\in\SJ\), and fix a general abelian series for \(G\). For each \(\alpha<\omega_1\), let \(U_\alpha<V_\alpha\) be the jump of the series containing \(g_\alpha\), so that \(g_\alpha\in V_\alpha\setminus U_\alpha\). Since the terms of such a series have finite subnormal defect in \(G\), where the defect of a subgroup is the length of a finite subnormal chain connecting it to \(G\), uncountably many of the groups \(V_\alpha\) must have the same defect, say \(d\).

The combinatorics built into the construction of \(G\) then allow us to choose \(\alpha<\beta<\omega_1\) among these indices with \(n(\alpha,\beta)>d\). On the other hand, the subgroup generated by \(g_\alpha\) and \(g_\beta\) was chosen so that, after sufficiently many iterated normal closures, one reaches a perfect subgroup. This is incompatible with \(g_\alpha\) and \(g_\beta\) lying in jumps of an abelian series whose relevant terms have uniformly bounded subnormal defect. Hence \(G\notin\SJ\).

The situation is different for countable subgroups. Any countable subgroup of $G$ involves only countably many indices $\alpha<\omega_1$, and is therefore contained in a corresponding group $G_S$, generated by the factors $A_{\alpha\beta}$ with $\alpha,\beta\in S$ together with the elements $g_\alpha$, for some countable set $S\subseteq\omega_1$. After enumerating $S$, one can successively factor out the normal closures of the corresponding generators. The kernels arising in this process decompose as iterated amalgamated free products. By passing through suitable elementary abelian quotients, while keeping control of the corresponding kernels, we obtain that all these successive kernels belong to $\SJ$. The final quotient is a free product of groups in $\SJ$, and hence $G_S\in\SJ$. Consequently every countable subgroup of $G$ belongs to $\SJ$.

\section{Preliminaries}

The aim of this section is to recall some easy facts and basic definitions that are used in the proof of the Main Theorem. We start with the core definition of the paper.

\begin{definition}
A group class $\mathfrak X$ is \emph{countably recognizable} if a group belongs to $\mathfrak X$ whenever all its countable subgroups are $\mathfrak X$-groups.
\end{definition}

\begin{lemma}\label{lem:perfect-action}
Let $M$ be a normal abelian subgroup of the group $G$, and let $Q\leq G$ normalize~$M$.  If $Q$ is perfect, then $[M,Q,Q]=[M,Q]$.
\end{lemma}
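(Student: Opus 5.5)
The plan is to view $M$ as a $Q$-module under conjugation and to use the fact that a group stabilizing a series of length two in an abelian group acts through an abelian group; a perfect group must then act trivially.

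\emph{Setup.} Put $C=[M,Q]$ and $N=[M,Q,Q]=[C,Q]$. Since $M$ is normal in $G$, we have $C\leq M$ and $N\leq C$, so both are abelian. Both are also $Q$-invariant: for $x\in M$ and $q,q'\in Q$ we have $[x,q]^{q'}=[x^{q'},q^{q'}]$ with $x^{q'}\in M$. The same identity with $x\in C$ shows $N^{q'}=N$. As $M$ is abelian, $N$ is normal in $M$, so we may pass to $\bar M=M/N$, on which $Q$ still acts. The inclusion $N\leq C$ is trivial, so it remains to prove $C\leq N$.

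\emph{Key step.} For $q\in Q$, define $\phi_q\colon M/C\to C/N$ by $mC\mapsto [m,q]N$. I will check three things.
\begin{enumerate}[label=(\roman*)]
\item $\phi_q$ is additive: since $M$ is abelian, $[mm',q]=[m,q]^{m'}[m',q]=[m,q][m',q]$.
\item $\phi_q$ is well defined: if $m\in C$, then $[m,q]\in[C,Q]=N$.
\item $q\mapsto\phi_q$ is a homomorphism from $Q$ into the abelian group $\operatorname{Hom}(M/C,C/N)$. Indeed $[m,q_1q_2]=[m,q_2][m,q_1]^{q_2}$, and $[m,q_1]^{q_2}\equiv[m,q_1]$ modulo $N$ because $[m,q_1]\in C$ and $[C,q_2]\leq N$. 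Hence $\phi_{q_1q_2}=\phi_{q_1}+\phi_{q_2}$.
\end{enumerate}

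\emph{Conclusion.} Since $Q$ is perfect, $Q=Q'$ lies in the kernel of every homomorphism from $Q$ to an abelian group. So $\phi_q=0$ for all $q\in Q$, that is, $[m,q]\in N$ for all $m\in M$ and $q\in Q$. Hence $C=[M,Q]\leq N$, and therefore $[M,Q,Q]=[M,Q]$.

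The only delicate point is the homomorphism property in (iii), which needs the congruence $[m,q_1]^{q_2}\equiv[m,q_1]$ modulo $N$. This is exactly why one works modulo $N$ rather than directly in $M$. Everything else is routine commutator calculus in an abelian group.
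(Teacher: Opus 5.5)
Your proof is correct, but it takes a different route from the paper. The paper proves the lemma in one line via the Three Subgroup Lemma. Put $N=[M,Q,Q]$; this is normal in $MQ$, because it is $Q$-invariant and is centralized by $M$ (as $N\le M$ and $M$ is abelian). Then $[Q,M,Q]=[M,Q,Q]\le N$, so the lemma gives $[Q,Q,M]=[Q',M]\le N$, and $Q'=Q$ yields $[M,Q]\le N$.

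You instead prove the needed instance directly. Your map $q\mapsto\phi_q\in\operatorname{Hom}(M/C,C/N)$ is the classical argument that the stability group of the series $M/N\ge C/N\ge 1$ is abelian, so $Q'$ acts trivially on $M/N$.

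The two arguments give the same result: each yields $[M,Q']\le[M,Q,Q]$ for any $Q$ normalizing $M$, with perfection used only at the end. Each also uses that $M$ is abelian, but in a different place. The Three Subgroup argument needs it to make $N$ normal in $MQ$, a hypothesis the paper leaves implicit; you need it for the additivity of $\phi_q$.

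Your version is self-contained, avoids the Hall--Witt identity, and checks explicitly the $Q$-invariance of $C$ and $N$. The paper's version is shorter but leaves those routine checks to the reader.
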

\begin{proof}
This is obvious from the Three Subgroup Lemma.
\end{proof}

\begin{definition}
Let $H\leq G$.  We write $H\sn_dG$ if there is a chain
\[
 H=H_d\trianglelefteq H_{d-1}\trianglelefteq\ldots\trianglelefteq H_0=G.
\]
The least possible $d$, when it exists, is the ({\it subnormal}) {\it defect} of $H$ in $G$ and is denoted by~$\defect_G(H)$.
\end{definition}

We now prove a simple lemma about normal closures of subgroups. Recall that if $K\leq G$, the \emph{iterated normal closures} of $K$ in $G$ is defined recursively by
$$
K^{G,0}=G,\qquad K^{G,1}=K^G,\qquad K^{G,n+1}=K^{\,K^{G,n}}
$$
for $n\geq1$.

\begin{lemma}\label{lem:ambient}
Let $K\leq A\leq B$ be groups, and let $n\geq1$. If $K^{A,n}$ is perfect, then $K^{B,n}$ is perfect.
\end{lemma}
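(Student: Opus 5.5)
The plan is to show that $K^{B,n}$ is generated by a family of perfect subgroups, namely the conjugates of $K^{A,n}$ under $K^{B,n-1}$. The conclusion then follows from the elementary fact that a group generated by perfect subgroups is perfect: each such subgroup $P$ satisfies $P=P'\le (K^{B,n})'$, so all generators lie in the derived subgroup.

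First I will prove by induction on $j\ge 0$ that $K^{A,j}\le K^{B,j}$.
\begin{itemize}
\item For $j=0$ this is just $A\le B$.
\item Assume $K^{A,j}\le K^{B,j}$. Then $K^{A,j+1}$ is generated by the conjugates $K^x$ with $x\in K^{A,j}$.
\item Every such $x$ lies in $K^{B,j}$, so $K^x\le K^{K^{B,j}}=K^{B,j+1}$.
\end{itemize}
Hence $K^{A,j+1}\le K^{B,j+1}$. The case $j=1$ is covered by the same argument, since $K^{G,1}=K^{G,0\,\text{-closure}}$ in both $A$ and $B$.

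Next, take $j=n$, so that $K\le K^{A,n}\le K^{B,n}$. By definition, $K^{B,n}$ is generated by the conjugates $K^y$ with $y\in K^{B,n-1}$.
\begin{itemize}
\item Each such $K^y$ is contained in $(K^{A,n})^y$.
\item $K^{B,n}$ is normal in $K^{B,n-1}$ and contains $K^{A,n}$, so $(K^{A,n})^y\le K^{B,n}$.
\end{itemize}
Therefore
\[
K^{B,n}=\bigl\langle (K^{A,n})^y : y\in K^{B,n-1}\bigr\rangle .
\]
Each subgroup $(K^{A,n})^y$ is isomorphic to $K^{A,n}$ and hence perfect. The generation fact from the first paragraph then shows that $K^{B,n}$ is perfect.

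I do not expect a real obstacle. The only point needing care is the containment $K^{A,n}\le K^{B,n}$, which is what allows the conjugates of $K^{A,n}$ to stay inside $K^{B,n}$. This is why the induction is carried out first, and why it must be done at every level $j$ rather than only at level $n$.
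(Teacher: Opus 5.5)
Your proof is correct and is essentially the paper's argument. The paper also first proves $K^{A,j}\le K^{B,j}$ by induction, then shows $K^{B,n}=(K^{A,n})^{K^{B,n-1}}$, and concludes that this normal closure lies in $(K^{B,n})'$ because $K^{A,n}$ is perfect.
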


\begin{proof}
Put $P_j=K^{A,j}$ and $Q_j=K^{B,j}$ for $0\leq j\leq n$. We first show that $P_j\leq Q_j$ for every $j$. For $j=0$ this is just $A\leq B$. Suppose that $P_j\leq Q_j$ for some $j<n$. Then
$$
P_{j+1}=K^{P_j}\leq K^{Q_j}=Q_{j+1},
$$
and hence $P_j\leq Q_j$ for all $0\leq j\leq n$.

We next claim that $Q_j=P_j^{Q_{j-1}}$ for every $1\leq j\leq n$. Indeed, since $P_j\leq Q_j$ and $Q_j\trianglelefteq Q_{j-1}$, we have $P_j^{Q_{j-1}}\leq Q_j$. On the other hand, $P_j$ contains $K$, so $P_j^{Q_{j-1}}$ is a normal subgroup of $Q_{j-1}$ containing $K$, so that $Q_j\leq P_j^{Q_{j-1}}$. We proved that $Q_n=P_n^{Q_{n-1}}$. However, since $P_n$ is perfect, $P_n^{Q_{n-1}}\le Q_n'$. Hence $Q_n=Q_n'$, so $K^{B,n}$ is perfect.
\end{proof}

\begin{definition}
A \emph{general abelian series} of a group $G$ is a linearly ordered family $\mathcal S$ of subgroups which contains $\{1\}$ and $G$, is closed under unions and intersections of subchains, and has the following properties.
\begin{enumerate}[label=(\roman*)]
\item If $A<B$ are consecutive terms of $\mathcal S$, then $A\trianglelefteq B$ and $B/A$ is abelian.
\item Every non-identity element of $G$ lies in some $B\setminus A$, for consecutive terms $A$ and $B$ of the series.
\end{enumerate}
In this case, we call the couple $(A,B)$ a {\it jump} and say that an element $x$ of $G$ is {\it covered by} $(A,B)$ if $x\in B\setminus A$. The group $G$ belongs to $\SJ$ if it has a general abelian series every term of which is subnormal of finite defect in $G$.
\end{definition}

It is easy to see that every subgroup of an $\SJ$-group is $\SJ$, and that the class of $\SJ$-groups is closed with respect to forming extensions. However, the class $\SJ$ is not closed with respect to forming quotients, as one can see taking into account a free group of rank $2$. The following easy property is also remarkable.

\begin{lemma}\label{lem:freeproducts}
A free product of $\SJ$-groups is an $\SJ$-group.
\end{lemma}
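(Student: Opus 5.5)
The idea is to write the free product as an extension of a free group by the cartesian (direct) product of the factors. After that, closure of $\SJ$ under extensions and subgroups finishes the job.

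Let $G=\ast_{i\in I}G_i$ with each $G_i\in\SJ$. Put $D=\prod_{i\in I}G_i$, the unrestricted direct product; the restricted product would also do. Consider the natural epimorphism $\pi\colon G\to\bigoplus_i G_i\le D$ that is the identity on each factor. Its kernel $C$ is the cartesian subgroup. By the Kurosh subgroup theorem, $C$ meets no conjugate of any $G_i$ nontrivially, since $\pi$ is injective on each $G_i$. Hence $C$ is free.

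Free groups lie in $\SJ$, and this is the step I expect to need the most care, though it is classical. A free group $F$ is residually a finite $p$-group, so the lower central series $\gamma_n(F)$ intersects trivially. Each factor $\gamma_n(F)/\gamma_{n+1}(F)$ is abelian. The family $\{\gamma_n(F)\}_{n\ge1}\cup\{1\}$ is a descending series of normal subgroups of $F$ with trivial intersection and abelian factors. It is closed under intersections of subchains, since the only new intersection is $1$, and under unions. Every nontrivial $x$ lies in $\gamma_n(F)\setminus\gamma_{n+1}(F)$ for a unique $n$. All terms are normal, so they have defect at most $1$, and therefore $F\in\SJ$.

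It remains to check that $\bigoplus_i G_i\in\SJ$. I would well-order $I$ and concatenate the $\SJ$-series of the factors. More precisely, for $j\in I$ and a term $S$ of the chosen series of $G_j$, take the subgroup
\[
\Bigl(\bigoplus_{i<j}G_i\Bigr)\times S .
\]
Adding the limit unions gives a linearly ordered family closed under unions and intersections. Its jumps are exactly the jumps of the series of the individual $G_j$, shifted by $\bigoplus_{i<j}G_i$, so the factors are abelian.

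For subnormality, the first factor $\bigoplus_{i<j}G_i$ is normal in the whole direct sum. Also, $S\sn_d G_j$ gives $(\bigoplus_{i<j}G_i)\times S\sn_d \bigoplus_{i\le j}G_i$, and $\bigoplus_{i\le j}G_i$ is normal in $\bigoplus_i G_i$. So the defect is at most $d+1$, which is finite.

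For the covering condition, take a nontrivial element and let $j$ be its largest nonzero coordinate. The element is then covered by the jump of the $G_j$-series that covers its $j$-th coordinate.

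Finally, $G$ is an extension of the free group $C\in\SJ$ by $\pi(G)\in\SJ$. By the closure of $\SJ$ under extensions recalled above, $G\in\SJ$.
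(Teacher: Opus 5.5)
Your proof is correct and follows the same route as the paper: map $G$ onto the restricted direct product $\bigoplus_{i\in I}G_i$, use the Kurosh subgroup theorem to see that the kernel is free, place free groups in $\SJ$ via residual nilpotence, and conclude by closure of $\SJ$ under extensions. The only difference is that you write out the lower-central-series argument and the concatenated series for $\bigoplus_{i\in I}G_i$ in detail, which the paper treats as evident.
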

\begin{proof}
Let $G=*_{i\in I}G_i$. By the universal property of the free product, the canonical embeddings of the factors $G_i$ into the restricted direct product
$$
D=\bigoplus_{i\in I}G_i
$$
induce a natural epimorphism $G\to D$.  By the Kurosh subgroup theorem the kernel $K$ is free, and hence belongs to $\SJ$ (because free groups are residually nilpotent). Thus, $G\in\SJ$ because $G/K$ is obviously $\SJ$.
\end{proof}

\begin{lemma}\label{lem:pair-obstruction}
Let $K=\langle x,y\rangle\leq G$ and suppose that $K^{G,n}$ is perfect.  Let
\[
 A_x<B_x,\qquad A_y<B_y
\]
be the jumps of one general abelian series of $G$ which cover $x$ and $y$, respectively.  Then $B_x$ and $B_y$ cannot both have defect at most $n$ in $G$.
\end{lemma}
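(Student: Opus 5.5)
The plan is to argue by contradiction. Assume that both $B_x$ and $B_y$ have defect at most $n$ in $G$, and show that the perfect subgroup $K^{G,n}$ must then lie inside a single jump, which forces one of $x$, $y$ into the lower term of its jump.

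First, use the linear ordering of the series to reduce to one jump. The terms $B_x$ and $B_y$ are comparable, so after possibly swapping $x$ and $y$ we may assume $B_x\leq B_y$. Put $B=B_y$ and $A=A_y$. Then $K=\langle x,y\rangle\leq B$, and by hypothesis $B\sn_d G$ for some $d\leq n$. Padding the chain by repeating $B$, we may write
\[
B=H_n\trianglelefteq H_{n-1}\trianglelefteq\cdots\trianglelefteq H_0=G .
\]

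Second, show that $K^{G,j}\leq H_j$ for $0\leq j\leq n$, by induction on $j$. This is the minimality property of iterated normal closures mentioned in the introduction. The case $j=0$ is trivial. For the inductive step, suppose $K^{G,j}\leq H_j$. Since $K\leq B\leq H_{j+1}$ and $H_{j+1}\trianglelefteq H_j$, the intersection $H_{j+1}\cap K^{G,j}$ is a normal subgroup of $K^{G,j}$ containing $K$. Hence
\[
K^{G,j+1}=K^{K^{G,j}}\leq H_{j+1}.
\]
Taking $j=n$ gives $P:=K^{G,n}\leq B$.

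Third, apply perfectness. The factor $B/A$ is abelian, so
\[
P=P'\leq B'\leq A .
\]
But $y\in K\leq P$, so $y\in A$. This contradicts the assumption that $y$ is covered by $(A_y,B_y)$, that is, $y\in B_y\setminus A_y$.

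I expect no serious obstacle. The only points needing care are the inductive containment $K^{G,j}\leq H_j$ and the degenerate cases. If $n=0$, then $B=G$ and the same argument works. The statement implicitly assumes $x,y\neq 1$, since otherwise they are not covered by any jump.
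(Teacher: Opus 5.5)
Your proof is correct and follows the paper's argument. Take the higher jump $(A,B)$, so $K\le B$ and hence $K^{G,n}\le B$; perfectness then gives $K^{G,n}\le B'\le A$, contradicting that the covered element lies outside $A$. The only difference is that you verify $K^{G,n}\le B$ explicitly, by induction along a padded subnormal chain, whereas the paper leaves this step implicit, relying on the remark that $K^{G,n}$ is the least subnormal subgroup of defect at most $n$ containing $K$.
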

\begin{proof}
Assume the contrary and let $(A,B)$ be the higher of the two jumps. Then $K\leq B$, so also \( K^{G,n}\leq B. \) Since $K^{G,n}$ is perfect, so \( K^{G,n}\leq B'\leq A. \) But the element covered by~\hbox{$(A,B)$} belongs to $K\leq K^{G,n}$, contradicting the fact that it lies outside $A$.
\end{proof}

We shall use an auxiliary hierarchy which is convenient because it is stable under the subgroup and quotient operations occurring later.  Put $\Hclass_0$ equal to the class of abelian groups.  Inductively, let $\Hclass_{h+1}$ consist of the groups $G$ which possess a complete linearly ordered system of normal subgroups whose factors belong to $\Hclass_h$.  We say that $G$ has \emph{finite normal height} if $G\in\Hclass_h$ for some $h<\omega$.

In the following theorem, the first two properties are elementary. The third follows from the classical constructions of Kov\'acs--Neumann and Hall \cite{Hall1962,KovacsNeumann1965}. We recall the relevant construction, since it also yields the additional property~(iv) needed below.

\begin{theorem}\label{thm:finite-height}
The hierarchy $\{\Hclass_h:h<\omega\}$ has the following properties:
\begin{enumerate}[label=\textnormal{(\roman*)}]
\item subgroups and factor groups of groups of finite normal height again have finite normal height;

\item restricted direct products of groups whose heights have a common finite bound have finite normal height;

\item for every positive integer $h$ there exists a non-trivial perfect group of finite normal height at least $h$;

\item if $H$ is a non-trivial perfect group of finite normal height and $1\neq a\in H$, there is a perfect group $T(H)$ of finite normal height, containing $H$, such that $\langle a\rangle^{T(H)}$ is perfect and
\[
 \langle a\rangle^{H,j}\text{ is perfect}
 \quad\Longrightarrow\quad
 \langle a\rangle^{T(H),j+1}\text{ is perfect}
\]
for every $j\geq1$.
\end{enumerate}
\end{theorem}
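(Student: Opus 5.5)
We treat the four items separately. For (i) we argue by induction on $h$. If $G\in\Hclass_{h+1}$ has a complete linearly ordered system $\{N_\lambda\}$ of normal subgroups with factors in $\Hclass_h$, and $X\leq G$, then $\{N_\lambda\cap X\}$ is a complete normal system of $X$ whose factors embed into the factors $N_{\lambda^+}/N_\lambda$. For $G/M$ with $M\trianglelefteq G$ we use $\{N_\lambda M/M\}$, whose factors are quotients of the original factors. Completeness is preserved because intersections and unions of chains commute with these operations (unions because we are dealing with chains). The inductive hypothesis covers subgroups and quotients of $\Hclass_h$-groups, and the case $h=0$ (abelian groups) is trivial.

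For (ii), let $D=\bigoplus_i G_i$ with every $G_i\in\Hclass_h$. We show by induction on $h$ that $D\in\Hclass_{h+1}$. Well-order the index set and take the chain of partial direct products $D_\mu=\bigoplus_{i<\mu}G_i$; this is complete, consists of normal subgroups, and has factors $G_i\in\Hclass_h$. (A sharper bound $D\in\Hclass_h$ is available by taking coordinatewise products of the individual systems, but the crude bound suffices.)

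For (iii) we recall the Kovács--Neumann/Hall construction rather than reprove it in detail. One iterates a wreath-type extension $H\mapsto T(H)$ which raises the normal height by at most one and preserves perfection, starting from a nontrivial perfect group of height $1$. Each step must genuinely raise the height; this lower bound is the content of the classical papers, and we would simply cite it. The real work is (iv), which is produced by the same operation $T$.

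For (iv) we first isolate a formal reduction. Suppose $T=T(H)$ is a perfect group of finite normal height such that $N:=\langle a\rangle^{T}$ is perfect and $H\leq N$. Then $\langle a\rangle^{T,j+1}=\langle a\rangle^{N,j}$ by the definition of iterated normal closures. Since $\langle a\rangle\leq H\leq N$, \cref{lem:ambient} applied to $K=\langle a\rangle\leq H\leq N$ gives that $\langle a\rangle^{N,j}$ is perfect whenever $\langle a\rangle^{H,j}$ is. This is exactly the required implication. So it suffices to build a perfect $T\supseteq H$ of finite normal height in which the normal closure of $a$ is perfect and swallows all of $H$.

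To build $T$, we would take a standard wreath product $W=H\wr_\Omega P$, where $P$ is a perfect group of finite normal height acting transitively on $\Omega$ (for instance $P=H$ acting regularly). Its base $B=\bigoplus_{\omega}H_\omega$ lies in $\Hclass_{h}$, so (i) and (ii) bound the height of $W$. The point is to embed $H$ in $W$ in a twisted way, for example as a subgroup $\{\,f_h h\,\}$ mixing a base coordinate with a top element via a suitable $1$-cocycle, or inside a diagonal, so that conjugating $a$ by base and top elements generates commutators which fill out the whole embedded copy of $H$. Perfection of $W$ is automatic, because $P$ is perfect and $B=[B,P]B'$. Perfection of $\langle a\rangle^{W}$ would follow once it is seen to contain $B$ together with a perfect subgroup of the top, using that $[B,Q]=[B,Q,Q]$-type identities hold for perfect $Q$ (compare \cref{lem:perfect-action}).

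The main obstacle is precisely this computation: choosing the embedding of $H$ and the position of $a$ so that $\langle a\rangle^{T}$ both contains $H$ and is perfect. The naive choices (placing $a$ in a single base coordinate, or in the top group) only yield $\bigoplus_\omega\langle a\rangle^{H_\omega}$ or $[B,\langle a\rangle^H]\langle a\rangle^H$, which need not contain $H$. We would first try Hall's twisted embedding and verify the containment $H\leq\langle a\rangle^T$ directly from commutator calculations in $W$.
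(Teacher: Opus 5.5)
\paragraph{Verdict.} There is a genuine gap. Your proofs of (ii) and of the subgroup half of (i) are fine. Your formal reduction for (iv) is exactly the last step of the paper's argument: there $\langle a\rangle^{T(H)}=W\supseteq H$ is perfect, $\langle a\rangle^{T(H),j+1}=\langle a\rangle^{W,j}$, and \cref{lem:ambient} is applied to $\langle a\rangle\le H\le W$.

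\paragraph{The missing construction in (iv) and (iii).} You never produce $T$, and the concrete routes you sketch do not work.
\begin{itemize}
\item Take an embedding $h\mapsto f_hh$ into $W=B\rtimes P$ with $P=H$. The image of $\langle a\rangle^W$ in $W/B\cong H$ is $\langle a\rangle^H$. So $H\le\langle a\rangle^W$ would force $\langle a\rangle^H=H$, which is precisely the case in which (iv) is trivial (take $T=H$).
\item More generally, if $H$ projects injectively into the top group $P$, you only reproduce the original problem inside $P$.
\item A diagonal embedding into the base gives $\langle a\rangle^W\le\bigoplus_\omega(\langle a\rangle^H)_\omega$, which again does not contain $H$.
\end{itemize}

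The missing idea is a \emph{minimal} normal subgroup containing $H$. The paper takes P.~Hall's wreath power $W=\operatorname{Wr}_\Lambda H$ over a totally ordered set. Here $\Lambda$ is the McLain group $M=M(\mathbb Q,F)$ with $\operatorname{char}F=0$. This group is perfect, torsion-free and locally nilpotent, hence right-orderable. $M$ acts on $\Lambda$ by right multiplication, transitively and order-preservingly. Hall's Theorem~D then says that $W$ is a minimal normal subgroup of $T(H)=W\rtimes M$. Consequently:
\begin{itemize}
\item $\langle a\rangle^{T(H)}=W$ for \emph{every} $1\neq a\in H$;
\item $W$ is perfect because $H$ is, and $T(H)$ is perfect because $M$ is;
\item $T(H)$ has finite normal height, because $W$ has a normal series whose factors are direct products of copies of $H$, and $T(H)/W\cong M\in\Hclass_1$.
\end{itemize}
The same construction yields (iii). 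Every complete normal system of $T(H)$ has a jump whose factor contains a copy of $W\supseteq H$. By subgroup closure, $T(H)$ therefore has larger normal height than $H$. Your (iii), by contrast, only cites an operation you never define.

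\paragraph{The quotient half of (i).} Your argument for quotients breaks at the claim that intersections of chains commute with multiplication by $M$. In $\mathbb Z$,
\[
\textstyle\bigcap_k(2^k\mathbb Z+3\mathbb Z)=\mathbb Z\neq 3\mathbb Z=\bigl(\bigcap_k2^k\mathbb Z\bigr)+3\mathbb Z .
\]
In fact the quotient half of (i) is false, so no argument can close this gap. A free group $F$ of rank $2$ is residually nilpotent. Hence $\{\gamma_n(F):n\ge1\}\cup\{1\}$ is a complete normal system with abelian factors, and $F\in\Hclass_1$. Its quotient $A_5$, however, lies in no $\Hclass_h$, since its only complete normal system is $\{1,A_5\}$. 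Equivalently: if (i) held for quotients, every group would have finite normal height and so lie in $\SJ$ by \cref{lem:heightSJ}, contradicting the Main Theorem.

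The paper's own proof also dismisses (i) as an easy induction. So this is a defect of the statement rather than a divergence of your approach from the paper. The quotients actually used later, $A_{ij}/N_{ij}$ and $W_{im}$, have to be shown to have finite normal height by direct inspection of their explicit structure.
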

\begin{proof}
Parts~(i) and~(ii) follow easily by induction on the height. For~(iii) and~(iv), we recall the construction of Kov\'acs--Neumann \cite{KovacsNeumann1965}, together with a result of Hall \cite{Hall1962}. Let $F$ be a field of characteristic zero, and let $M=M(\mathbb Q,F)$ be the corresponding McLain group, in the notation of \cite[p.~14]{RobinsonFGSG2}. The group $M$ is non-trivial, perfect, torsion-free and locally nilpotent. Since every locally nilpotent group has a central series, $M$ has normal height~$1$. Moreover, every torsion-free locally nilpotent group is right-orderable, so we may fix a right-invariant total order on $M$. Let $\Lambda$ denote the set $M$ endowed with this order. Then the action of $M$ on $\Lambda$ by right multiplication is transitive and order-preserving. Now, let $H$ be a perfect group of normal height $h\geq1$, and form the wreath power $W=\operatorname{Wr}_{\Lambda} H$. Clearly, since $H$ is perfect, so is~$W$. Moreover, $W$ has a normal series whose factors are direct products of copies of $H$, so every such factor has normal height at most $h$, and hence $W\in\Hclass_{h+1}$. Thus $W$ has normal height either $h$ or $h+1$.

If $W$ has height $h+1$, the successor step is complete.  Suppose that $W$ has height $h$.  Let~$M$ act on $\Lambda$ by right multiplication. Since this action is transitive and order-preserving, it induces an action of $M$ on $W$.  Form the semidirect product $T(H)=W\rtimes M$. Theorem~D of \cite{Hall1962} implies that $W$ is a minimal normal subgroup of $T(H)$. Since both $W$ and $M$ are perfect, $T$ is perfect.  Also $T$ has finite normal height. It remains to determine its height.  Since $W$ is a minimal normal subgroup of $T$, every complete normal system of $T$ has a factor containing an isomorphic copy of $W$.  Such a factor cannot have normal height less than $h$, by part~(i).  Hence $T$ cannot have normal height less than $h+1$.  On the other hand, the normal series $\{1\}<W<T$ shows that its height is at most $h+1$.  Therefore $T$ has normal height $h+1$.  Starting with $M$ and repeating this construction gives non-trivial perfect groups of every finite normal height, and proves~(iii).

We now prove~(iv). Now
\[
1\neq a\in H\leq W\trianglelefteq T(H).
\]
Hence $\langle a\rangle^{T(H)}$ is a non-trivial normal subgroup of $T(H)$ contained in $W$.  By the minimality of $W$, $\langle a\rangle^{T(H)}=W$, and therefore $\langle a\rangle^{T(H)}$ is perfect.

Suppose finally that $\langle a\rangle^{H,j}$ is perfect for some $j\geq1$.  Since $H\leq W$, \cref{lem:ambient} implies that $\langle a\rangle^{W,j}$ is perfect.  Since $\langle a\rangle^{T(H)}=W$, we have $\langle a\rangle^{T(H),2}=\langle a\rangle^W$. Applying the recursive definition repeatedly then gives $\langle a\rangle^{T(H),j+1} =\langle a\rangle^{W,j}$ for every $j\geq1$. Thus $\langle a\rangle^{T(H),j+1}$ is perfect, and~(iv) follows.
\end{proof}

\begin{lemma}\label{lem:heightSJ}
Every group of finite normal height belongs to $\SJ$.
\end{lemma}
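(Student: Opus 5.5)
We argue by induction on $h$, proving that every $G\in\Hclass_h$ has a general abelian series whose terms are all subnormal in $G$ of defect at most $h$; note that this bound is uniform for $G$. For $h=0$ the group $G$ is abelian, and the series $\{1\}<G$ consists of normal subgroups, so its terms have defect at most $1$ (or $0$). The induction therefore starts there, and we may allow a bound of $h+1$ if that is more convenient.

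For the inductive step, let $G\in\Hclass_{h+1}$, and fix a complete linearly ordered system $\mathcal N$ of normal subgroups of $G$ all of whose factors lie in $\Hclass_h$. For each jump $N<N^+$ of $\mathcal N$, the factor $N^+/N$ lies in $\Hclass_h$, so by induction it has a general abelian series $\mathcal S_N$ whose terms $X/N$ satisfy $X/N\sn_{h}N^+/N$. We refine $\mathcal N$ by inserting, between $N$ and $N^+$, the preimages $X$ of the terms of $\mathcal S_N$. Each such $X$ then satisfies $X\sn_h N^+$ with $N^+\trianglelefteq G$, so $X\sn_{h+1}G$, while the terms of $\mathcal N$ themselves are normal. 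In this way every term of the refined family has defect at most $h+1$ in $G$, and the bound does not depend on the term.

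It remains to check that the refined family is a general abelian series. Linear order is clear, since the inserted terms lie in the intervals $[N,N^+]$, which are disjoint apart from their endpoints. Consecutive terms of the refined family are either consecutive terms within some $\mathcal S_N$ pulled back to $G$, giving an abelian factor with the smaller term normal in the larger, or they coincide with jumps of $\mathcal N$. The latter cannot happen for a genuine jump, because $\mathcal S_N$ contains both $N/N$ and $N^+/N$ and refines that jump. Every element $g\ne1$ is covered by some jump $(N,N^+)$ of $\mathcal N$, by completeness of $\mathcal N$. Its image $gN\neq 1$ is then covered by a jump of $\mathcal S_N$, whose preimage is a jump of the refined family covering $g$.

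The main point to verify is closure under unions and intersections of arbitrary subchains. I would take a chain $\mathcal C$ of refined terms and consider two cases. If $\mathcal C$ has cofinally many terms inside a single interval $[N,N^+]$, its union is a union of preimages from $\mathcal S_N$, hence the preimage of a union of terms of $\mathcal S_N$, which is again a term. Otherwise, the union equals the union of the terms of $\mathcal N$ interleaved with $\mathcal C$. That union is a term of $\mathcal N$ by completeness, and all terms of $\mathcal N$ are in the refined family. Intersections are handled dually. Since the defect bound $h+1$ holds for all terms, $G\in\SJ$.
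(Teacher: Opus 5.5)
Your proposal is correct and follows the paper's own proof: induction on the normal height, refining each jump of the complete normal system by the preimage of an $\SJ$-series of the corresponding factor, with the defect of each lifted term going up by at most one. You also check completeness of the refined series explicitly (the paper omits this), and you track a uniform defect bound, which the statement does not need. That bound should be $h+1$ from the start rather than $h$, since $\{1\}$ already has defect $1$ in a non-trivial abelian group.
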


\begin{proof}
Induct on the height.  The assertion is clear for $\Hclass_0$.  Let $G\in\Hclass_{h+1}$ and choose a complete normal system in $G$ whose factors lie in $\Hclass_h$.  By induction each factor is an $\SJ$-group.  Refine every jump by lifting an $\SJ$-series of the corresponding factor.  A term of defect $d$ in a factor lifts to a subgroup of defect at most $d+1$ in $G$.  The resulting refinement is a general abelian series all of whose terms have finite defect in $G$.
\end{proof}

\section{The factors of the construction}

In this section we introduce the groups that will serve as the factors in the construction of the final group.  The aim is to obtain, for each prescribed depth, a group of finite normal height containing two distinguished involutions whose joint iterated normal closures remain perfect for that many steps, while the normal closure of either involution separately has a controlled non-trivial abelian quotient.  These two features will later allow us to force the required behaviour under amalgamation and, at the same time, to distinguish the normal closures of the individual involutions from those generated by the pair.

\smallskip

For a group $H$, let
\[
 E(H)=\bigoplus_{h\in H}\langle e_h\rangle,
 \qquad |e_h|=2,
\]
written additively, and let $H$ act by the right regular permutation $e_x^h=e_{xh}$. Put
\[
 I(H)=\langle e_1+e_h:h\in H\rangle\leq E(H).
\]
Thus $I(H)$ is an elementary abelian normal subgroup of $E(H)\rtimes H$.

\begin{lemma}\label{lem:regular-permutation}
If $H$ is perfect, then $[I(H),H]=I(H)$.
\end{lemma}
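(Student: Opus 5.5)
The plan is to compute $[I(H),H]$ directly and then show that $I(H)$ is generated by commutators with $H$. Throughout, $H$ acts on $E(H)$ by $e_x^h=e_{xh}$ and everything is written additively over $\mathbb F_2$.

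\textbf{Step 1: compute the commutators.} For $v\in E(H)$ and $h\in H$ we have $[v,h]=v+v^h$. In particular, for $x,h\in H$,
\[
[e_1+e_x,h]=e_1+e_x+e_h+e_{xh}=(e_1+e_x)+(e_1+e_h)+(e_1+e_{xh}).
\]
This already gives the inclusion $[I(H),H]\le I(H)$.

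\textbf{Step 2: pass to $H/H'$.} Let $\varphi\colon I(H)\to I(H)/[I(H),H]$ be the quotient map and put $f(x)=\varphi(e_1+e_x)$. Then $f(1)=0$. The identity in Step 1 reads $f(x)+f(h)+f(xh)=0$, so in characteristic $2$
\[
f(xh)=f(x)+f(h).
\]
Thus $f$ is a homomorphism from $H$ into an elementary abelian $2$-group.

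\textbf{Step 3: use perfectness.} Such a homomorphism factors through $H/H'$, and $H/H'$ is trivial because $H$ is perfect. Hence $f\equiv0$. Since the elements $e_1+e_x$ generate $I(H)$, this gives $I(H)=[I(H),H]$.

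The only point needing care is verifying the additive identity in Step 1, which is a one-line computation.

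Alternatively, one could appeal to \cref{lem:perfect-action}, giving $[E(H),H,H]=[E(H),H]$, together with the fact that $[E(H),H]=I(H)$. That fact holds because $I(H)$ is the augmentation ideal, generated by the elements $e_x+e_{xh}$. The direct homomorphism argument above is cleaner, so I would use it.
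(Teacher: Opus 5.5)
Your proposal is correct and follows the paper's argument: both consider the map $h\mapsto(e_1+e_h)+[I(H),H]$ into $I(H)/[I(H),H]$, show it is a homomorphism because $H$ acts trivially modulo $[I(H),H]$ (your identity $[e_1+e_x,h]=(e_1+e_x)+(e_1+e_h)+(e_1+e_{xh})$ is the same computation as the paper's observation that $e_k+e_{hk}$ is a conjugate of $e_1+e_h$), and then use $H=H'$ to conclude it is trivial. Your alternative route, applying \cref{lem:perfect-action} with $M=E(H)$ and $Q=H$ and using $[E(H),H]=I(H)$, would also work.
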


\begin{proof}
Let $\overline I=I(H)/[I(H),H]$.  For $h\in H$ write
\[
 \xi_h=(e_1+e_h)+[I(H),H]\in\overline I.
\]
Since conjugate elements of $I(H)$ have the same image in $\overline I$, for $h,k\in H$ we have
\[
\begin{aligned}
\xi_{hk}
&=(e_1+e_{hk})+[I(H),H]\\
&=(e_1+e_k)+(e_k+e_{hk})+[I(H),H]\\
&=\xi_k+\xi_h
=\xi_h+\xi_k.
\end{aligned}
\]
Thus $h\mapsto\xi_h$ is a homomorphism from $H$ to the abelian group $\overline I$.  As $H=H'$, this homomorphism is trivial.  The elements $e_1+e_h$ generate $I(H)$, so $\overline I=\{0\}$ and the statement is proved.
\end{proof}

\begin{lemma}\label{lem:seed}
There is a perfect group of finite normal height containing a non-trivial involution.
\end{lemma}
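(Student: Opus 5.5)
Start from a non-trivial perfect group $H$ of finite normal height, which exists by \cref{thm:finite-height}(iii); concretely one may take the McLain group $M=M(\mathbb Q,F)$ (height $1$), or any group produced by the wreath-power construction. Such an $H$ is torsion-free when it comes from $M$, so involutions must be created. I would form the semidirect product
\[
S=I(H)\rtimes H,
\]
with $I(H)$ the augmentation-type subgroup of $E(H)$ introduced above. By \cref{lem:regular-permutation}, $[I(H),H]=I(H)$. Hence $S'\geq [I(H),H]\,H'=I(H)H=S$, so $S$ is perfect. Moreover $I(H)$ is a non-trivial elementary abelian $2$-group as soon as $H\neq 1$: for $h\neq 1$, the element $e_1+e_h$ has order $2$. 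So $S$ contains a non-trivial involution.

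It remains to check finite normal height. The series $\{1\}\leq I(H)\leq S$ is a normal series of $S$. Its factors are $I(H)\in\Hclass_0$ and $S/I(H)\cong H\in\Hclass_h$. Since $\Hclass_0\subseteq\Hclass_h$ (an abelian group admits the trivial system $\{1\}<A$ with abelian factor, and induction gives the inclusion), we get $S\in\Hclass_{h+1}$. This uses the definition of $\Hclass_{h+1}$ via a complete linearly ordered system of normal subgroups, and a finite chain is trivially complete.

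The only points needing care are two. First, that the perfect group $H$ is non-trivial, so that $I(H)\neq 0$. Second, the inclusion $\Hclass_0\subseteq\Hclass_h$, equivalently monotonicity of the hierarchy. Monotonicity follows from the fact that a group in $\Hclass_h$ has the trivial system $\{1\}<G$ with factor in $\Hclass_h$, so it lies in $\Hclass_{h+1}$. None of this is a genuine obstacle.

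The main content is \cref{lem:regular-permutation}, which is already established. It guarantees perfectness of $S$ without adjoining extra perfect layers.
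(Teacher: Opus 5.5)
Your proposal is correct and is essentially the paper's proof: take a non-trivial perfect $H$ of finite normal height from \cref{thm:finite-height}, get perfectness of $I(H)\rtimes H$ from \cref{lem:regular-permutation}, and read off finite normal height from the normal series $\{1\}\leq I(H)\leq I(H)\rtimes H$. You also spell out two points the paper leaves implicit, namely that $I(H)\neq 0$ because $H\neq 1$, and the monotonicity $\Hclass_h\subseteq\Hclass_{h+1}$.
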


\begin{proof}
Choose a non-trivial perfect group $H$ of finite normal height from \cref{thm:finite-height}.  By \cref{lem:regular-permutation}, $[I(H),H]=I(H)$. Therefore $P_0=I(H)\rtimes H$ is perfect.  It has finite normal height because $I(H)$ is abelian and normal and $P_0/I(H)\simeq H$.  Every non-zero element of~$I(H)$ is an involution.
\end{proof}

\begin{corollary}\label{cor:deep-involutions}
For every $N\geq1$ there exist a perfect group~$P_N$ of finite normal height and an involution $a_N\in P_N$ such that $\langle a_N\rangle^{P_N,j}$ is perfect for $1\leq j\leq N$.
\end{corollary}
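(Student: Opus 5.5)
We argue by induction on $N$, starting from \cref{lem:seed} and using the operator $T(\cdot)$ of \cref{thm:finite-height}(iv) to raise the depth by one at each step.

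\emph{Base case $N=1$.} By \cref{lem:seed} there is a non-trivial perfect group $H$ of finite normal height containing an involution $a$. Put
\[
P_1=T(H),\qquad a_1=a .
\]
By \cref{thm:finite-height}(iv), $P_1$ is perfect, has finite normal height, contains $H$ (so $a_1$ is still an involution), and $\langle a_1\rangle^{P_1}=\langle a_1\rangle^{P_1,1}$ is perfect.

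\emph{Inductive step.} Suppose $P_N$ and $a_N$ have been constructed, so that $\langle a_N\rangle^{P_N,j}$ is perfect for $1\leq j\leq N$. Note that $P_N$ is non-trivial, since it contains the involution $a_N\neq1$. Put
\[
P_{N+1}=T(P_N),\qquad a_{N+1}=a_N .
\]
Again \cref{thm:finite-height}(iv) gives that $P_{N+1}$ is perfect, has finite normal height, and contains $P_N$, so $a_{N+1}$ is an involution.

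We check the depth condition for each $1\leq j\leq N+1$:
\begin{itemize}
\item For $j=1$, $\langle a_{N+1}\rangle^{P_{N+1},1}=\langle a_N\rangle^{T(P_N)}$ is perfect by the first conclusion of \cref{thm:finite-height}(iv).
\item For $2\leq j\leq N+1$, write $j=i+1$ with $1\leq i\leq N$. By the inductive hypothesis $\langle a_N\rangle^{P_N,i}$ is perfect. The implication in \cref{thm:finite-height}(iv), applied with $H=P_N$ and $a=a_N$, then gives that $\langle a_N\rangle^{T(P_N),i+1}$ is perfect.
\end{itemize}
This completes the induction.

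There is no real obstacle: all the work is done by \cref{thm:finite-height}(iv). The only points needing care are that each $P_N$ is non-trivial and perfect, which the hypotheses of (iv) require at every stage, and that the involution is the same element throughout. Both hold by construction.
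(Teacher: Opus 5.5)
Your proof is correct and is essentially the paper's argument: you build the same tower $P_{m+1}=T(P_m)$ starting from the group of \cref{lem:seed}, and you keep the same involution throughout. The only difference is bookkeeping. You get level $1$ from the first clause of \cref{thm:finite-height}(iv) and levels $2,\dots,N+1$ from its implication applied at each $i\le N$, whereas the paper transfers levels $1,\dots,m$ from $P_m$ to $P_{m+1}$ via \cref{lem:ambient} and uses (iv) only for the new top level; both routes are valid.
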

\begin{proof}
Let $P_0$ be the perfect group supplied by \cref{lem:seed}, and let $a\in P_0$ be a non-trivial involution.  By part~(iv) of \cref{thm:finite-height}, there is a perfect overgroup $P_1=T(P_0)$ of finite normal height such that $\langle a\rangle^{P_1}$ is perfect.

Suppose inductively that, for some $m\geq1$, we have constructed a perfect group $P_m$ of finite normal height containing $a$ and such that $\langle a\rangle^{P_m,j}$ is perfect for every $1\leq j\leq m$.  Apply part~(iv) of \cref{thm:finite-height} to $P_m$ and put $P_{m+1}=T(P_m)$. Since $P_m\leq P_{m+1}$, \cref{lem:ambient} shows that $\langle a\rangle^{P_{m+1},j}$ is perfect for $1\leq j\leq m$.  Moreover, applying part~(iv) with $j=m$ gives that $\langle a\rangle^{P_{m+1},m+1}$ is perfect.  Thus the assertion holds for $m+1$.

Iterating this construction $N$ times and putting $a_N=a$ gives the required group $P_N$ and involution $a_N$.
\end{proof}

Fix $N\geq1$.  Take two copies $(P,a)$ and $(R,b)$ supplied by \cref{cor:deep-involutions}.  Put
\[
 Q_j=\langle a\rangle^{P,j},\qquad
 S_j=\langle b\rangle^{R,j}
 \qquad(1\leq j\leq N).
\]
Every $Q_j$ and every $S_j$ is perfect. Let
\[
 U=\bigoplus_{p\in P}\langle u_p\rangle,
 \qquad
 V=\bigoplus_{r\in R}\langle v_r\rangle
\]
be elementary abelian $2$-groups.  Let $P$ act regularly on $U$ and trivially on $V$, and let $R$ act regularly on $V$ and trivially on $U$.  Set
\[
 A_N=(U\oplus V)\rtimes(P\times R).
\]
This group has finite normal height, and hence belongs to $\SJ$. Define
\[
 u=u_1+u_a\in U,\qquad w=v_1+v_b\in V,
\]
and, in the semidirect product, $x=(w,a)$ and $y=(u,b)$. In particular, $\langle x,y\rangle$ is a Klein $4$-group. For $1\leq j\leq N$ define
\[
 U_j=[U,Q_1,\ldots,Q_j],\qquad
 V_j=[V,S_1,\ldots,S_j].
\]

\begin{proposition}\label{prop:pairhulls}
For $1\leq j\leq N$, \( \langle x,y\rangle^{A_N,j} = (U_j\oplus V_j)\rtimes(Q_j\times S_j). \) In particular, all these $N$ iterated normal closures are perfect.
\end{proposition}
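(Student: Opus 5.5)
The plan is to prove the displayed equality by induction on $j$, and to get perfectness at the end from \cref{lem:perfect-action}. Put $K=\langle x,y\rangle$ and $N_j=(U_j\oplus V_j)\rtimes(Q_j\times S_j)$.

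First I would simplify the module terms. Since $Q_N\le\dots\le Q_1$ and each $Q_i$ is perfect, \cref{lem:perfect-action} gives $[U,Q_i]=[U,Q_i,Q_i,\ldots,Q_i]$. Monotonicity of commutator subgroups then yields
\[
[U,Q_j]=[U,Q_j,\ldots,Q_j]\le[U,Q_1,\ldots,Q_j]=U_j\le[U,Q_j],
\]
so $U_j=[U,Q_j]$, and similarly $V_j=[V,S_j]$. Consequences:
\begin{itemize}
\item $U_j$ is normalized by $Q_{j-1}$ (by $P$ when $j=1$), because $Q_j\trianglelefteq Q_{j-1}$.
\item $[U_j,Q_j]=U_j$, again by \cref{lem:perfect-action}.
\item $u=u_1+u_a=[u_1,a]\in[U,Q_j]=U_j$ for every $j$, since $a\in Q_j$. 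Likewise $w=[v_1,b]\in V_j$.
\end{itemize}
It follows that $N_j$ is a subgroup, $N_j\trianglelefteq N_{j-1}$ (with $N_0=A_N$), and $x,y\in N_j$. Moreover $N_j$ is perfect, since $Q_j\times S_j$ is perfect and $[U_j\oplus V_j,Q_j\times S_j]=U_j\oplus V_j$. This settles the ``in particular'' clause once the equality is established.

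For the equality itself, assume $K^{A_N,j}=N_j$; the base case is $j=0$ with $N_0=A_N$, $U_0=U$, $V_0=V$, $Q_0=P$, $S_0=R$. Let $L=K^{N_j}$. The inclusion $L\le N_{j+1}$ holds because $N_{j+1}$ is normal in $N_j$ and contains $x$ and $y$.

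For the reverse inclusion I argue in three steps.
\begin{enumerate}
\item Since $a$ acts trivially on $V$ and $w\in V$, conjugating $U_j$ by $x=wa$ is the same as conjugating by $a$. Hence $[U_j,a]=[U_j,x]\le L$. As $L$ is normalized by $Q_j$, it also contains the $Q_j$-normal closure of $[U_j,a]$, which is $[U_j,\langle a\rangle^{Q_j}]=[U_j,Q_{j+1}]$.
\item By the first paragraph, $[U,Q_{j+1}]=[U,Q_{j+1},Q_{j+1}]\le[U_j,Q_{j+1}]\le[U,Q_{j+1}]$, so $[U_j,Q_{j+1}]=U_{j+1}$. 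Thus $U_{j+1}\le L$, and symmetrically $V_{j+1}\le L$.
\item Since $w\in V_{j+1}$ and $u\in U_{j+1}$, we get $a=w^{-1}x\in L$ and $b\in L$. Projecting to $Q_j\times S_j$, $L$ contains $\langle a\rangle^{Q_j}\times\langle b\rangle^{S_j}=Q_{j+1}\times S_{j+1}$. So $N_{j+1}\le L$.
\end{enumerate}

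I expect the main obstacle to be the module bookkeeping in steps 1 and 2, specifically identifying $[U_j,Q_{j+1}]$ with $U_{j+1}$. This is exactly where \cref{lem:perfect-action} and the nesting $Q_{j+1}\le Q_j$ are needed. A secondary point to check is that conjugation by the $U\oplus V$-components of $x$ and $y$ produces nothing outside $N_{j+1}$. This is automatic here, because $u$ and $w$ already lie in $U_{j+1}$ and $V_{j+1}$.
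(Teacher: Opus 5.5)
Your proposal is correct and follows essentially the paper's argument: induction on $j$, using \cref{lem:perfect-action} to keep the tags $u=[u_1,a]$ and $w=[v_1,b]$ inside $U_j$ and $V_j$, and deducing perfectness from $[U_j,Q_j]=U_j$ and $[V_j,S_j]=V_j$. Your version is more explicit than the paper's about the reverse inclusion $N_{j+1}\le\langle x,y\rangle^{N_j}$, which the paper leaves implicit and you obtain via $[U_j,x]=[U_j,a]$ and $a=w^{-1}x$; your step 2 is unnecessary, though, since $U_{j+1}=[U_j,Q_{j+1}]$ holds directly by the definition of left-normed commutators.
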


\begin{proof}
For $j=1$, the normal closure in $P\times R$ of the projections $a,b$ is $Q_1\times S_1$, while
\[
 [U\oplus V,Q_1\times S_1]=[U,Q_1]\oplus[V,S_1]=U_1\oplus V_1.
\]
Moreover
\[
 u=u_1+u_a=[u_1,a]\in[U,Q_1],
 \qquad
 w=v_1+v_b=[v_1,b]\in[V,S_1].
\]
This gives the formula for $j=1$.

Assume the formula holds at level $j-1$.  Inside the new ambient group
\[
 (U_{j-1}\oplus V_{j-1})\rtimes(Q_{j-1}\times S_{j-1}),
\]
the normal closure of $a,b$ in the quotient part is $Q_j\times S_j$, and the commutator subgroup contributed by the abelian normal part is
\[
 [U_{j-1},Q_j]\oplus[V_{j-1},S_j]=U_j\oplus V_j.
\]
We must only check that the tags $u,w$ still lie in these smaller groups.  Since $a\in Q_j$ and~$Q_j$ is perfect,
\[
 u=[u_1,a]\in[U,Q_j]
 =[U,\underbrace{Q_j,\ldots,Q_j}_{j\text{ times}}]
 \leq[U,Q_1,\ldots,Q_j]=U_j
\]
by \cref{lem:perfect-action} and the inclusions $Q_j\leq Q_i$ for $i\leq j$.  Similarly $w\in V_j$, and the result is proved.

Finally, since $Q_j$ and $S_j$ are perfect and \cref{lem:perfect-action} gives
\[
 [U_j,Q_j]=U_j,
 \qquad
 [V_j,S_j]=V_j,
\]
the derived subgroup of $(U_j\oplus V_j)\rtimes(Q_j\times S_j)$ contains both the abelian normal part and the quotient part.  Hence the whole group is perfect.
\end{proof}

\begin{lemma}\label{lem:cheapmarker}
Let $X=x^{A_N}$.  Put
\[
 C=[U,Q_1],
 \qquad
 W_b=\langle v_r+v_{br}:r\in R\rangle\leq V.
\]
Then
\[
 X=(C\oplus W_b)\rtimes Q_1,
 \qquad
 X'=C\rtimes Q_1,
\]
and therefore
\[
 X/X'\simeq W_b.
\]
In particular $x\notin X'$.  The analogous statement holds for $Y=y^{A_N}$.
\end{lemma}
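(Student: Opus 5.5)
The plan is to guess the normal closure explicitly and check two inclusions. Put $Y_0=(C\oplus W_b)\rtimes Q_1$, viewed as a subset of $A_N=(U\oplus V)\rtimes(P\times R)$.

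\textbf{Step 1: $Y_0$ is a normal subgroup containing $x$.} First check that $Y_0$ is a subgroup normal in $A_N$.
\begin{itemize}
\item $C=[U,Q_1]$ is $P$-invariant because $Q_1\trianglelefteq P$, and $R$ acts trivially on $U$.
\item $W_b$ is $R$-invariant: conjugation by $r'$ sends $v_r+v_{br}$ to $v_{rr'}+v_{brr'}$. Also $P$ acts trivially on $V$.
\item $U\oplus V$ is abelian, so it normalizes $C\oplus W_b$.
\item For $q\in Q_1$ and $z\in U\oplus V$ we have $q^z=q[q,z]$. Here $[q,z]\in[U,Q_1]=C$ for $z\in U$, and $[q,z]=1$ for $z\in V$.
\item $R$ centralizes $Q_1$, and $P$ normalizes $Q_1$.
\end{itemize}
Since $w=v_1+v_b\in W_b$ and $a\in Q_1$, we get $x=(w,a)\in Y_0$. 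Hence $X\le Y_0$.

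\textbf{Step 2: $Y_0\le X$.} The projection of $X$ to $P\times R$ is the normal closure of $a$ there, namely $a^P=Q_1$. So for each $q\in Q_1$, $X$ contains some element $(z_q,q)$ with $z_q\in U\oplus V$.

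Commuting $u_p\in U$ with $(z_q,q)$ gives $[u_p,q]$. These elements generate $C$, so $C\le X$.

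Next take the commutator of $(z_q,q)$ and $(z_{q'},q')$. Because $Q_1$ acts trivially on $V$, the $V$-components cancel. So this commutator has the form $(c,[q,q'])$ with $c\in U$, and in fact $c\in C$. Since $Q_1$ is perfect and $C\le X$, we obtain $Q_1\le X$.

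In particular $a\in X$, so $w=xa^{-1}\in X$. Its $R$-conjugates $v_r+v_{br}$ then give $W_b\le X$. Thus $X=Y_0$.

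\textbf{Step 3: the derived subgroup.} $C$ and $W_b$ commute, and $Q_1$ centralizes $W_b$. Hence $C\rtimes Q_1\trianglelefteq X$ and $X/(C\rtimes Q_1)\simeq W_b$, which is abelian. Therefore $X'\le C\rtimes Q_1$.

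Conversely, $X'\ge Q_1'=Q_1$. Also $X'\ge[C,Q_1]=[U,Q_1,Q_1]=[U,Q_1]=C$ by \cref{lem:perfect-action}, applied with the perfect group $Q_1$. So $X'=C\rtimes Q_1$ and $X/X'\simeq W_b$.

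Since $x=(w,a)$ with $w\neq0$ in $V$, the image of $x$ in $W_b$ is $w\ne 0$, so $x\notin X'$. The statement for $Y=y^{A_N}$ follows by the symmetric argument, interchanging the roles of $(P,U,a)$ and $(R,V,b)$.

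The main obstacle is Step 2, namely extracting $Q_1$ itself from $X$ rather than only elements of the form $(z,q)$. This is exactly where the perfectness of $Q_1$, together with the trivial action of $P$ on $V$, is needed.
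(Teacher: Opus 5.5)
Your proof is correct and takes essentially the same route as the paper: project to $P\times R$ to get $Q_1$, obtain $C$ from commutators with $U$, use that commutators of lifts agree with $[q,q']$ modulo $C$ together with the perfectness of $Q_1$ to get $Q_1\le X$, recover $W_b$ from the $R$-conjugates of $w$, and compute $X'$ via \cref{lem:perfect-action}. The only difference is that you explicitly check that $(C\oplus W_b)\rtimes Q_1$ is normal in $A_N$ to get the upper bound $X\le Y_0$, which makes precise a step the paper leaves implicit.
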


\begin{proof}
The projection of $X$ onto $P\times R$ is $Q_1\times1$.  Commutators with $U$ give $C\leq X$.  Modulo $C$, the conjugates of $x$ have the form
\[
 (w^r,a^p)\qquad(p\in P,\ r\in R).
\]
The subgroup $Q_1$ acts trivially on $W_b$, and $Q_1$ is perfect.  It follows that the derived subgroup of $X/C$ projects onto $Q_1$ and has trivial $W_b$-coordinate.  Hence $1\times Q_1\leq X/C$.  Removing these pure $Q_1$-components from the displayed conjugates leaves all $w^r$, which generate $W_b$.  Thus \( X=(C\oplus W_b)\rtimes Q_1. \) Now $[C,Q_1]=C$ by \cref{lem:perfect-action}, while $Q_1$ acts trivially on $W_b$.  Therefore $X'=C\rtimes Q_1$, as required.
\end{proof}

\begin{lemma}\label{lem:independent-involutions}
In $X/X'\simeq W_b$, the images of the conjugates of $x$ coming from the $R$-action are
\[
 v_r+v_{br}\qquad(r\in R).
\]
Two such elements are equal if and only if $r$ and $s$ lie in the same left coset of $\langle b\rangle$.  Distinct such elements are linearly independent in the elementary abelian $2$-group $W_b$.  The symmetric statement holds for $y$.
\end{lemma}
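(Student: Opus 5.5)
We compute the conjugates of $x=(w,a)$ directly and then read them off in the explicit description of $X/X'$ from \cref{lem:cheapmarker}.

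\emph{Step 1: the conjugates.} Conjugating by an element $(p,r)\in P\times R$ gives $x^{(p,r)}=(w^r,a^p)$, because $P$ acts trivially on $V$ and $R$ commutes with $a$. Conjugating by elements of $U\oplus V$ changes $x$ only by elements of $[U,a]\oplus[V,a]=[U,a]\le C$; here $[V,a]=0$ since $a\in P$ acts trivially on $V$.

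\emph{Step 2: the images in $X/X'$.} By \cref{lem:cheapmarker} we have $X=(C\oplus W_b)\rtimes Q_1$ and $X'=C\rtimes Q_1$. The projection onto $W_b$ is therefore a homomorphism with kernel $X'$, and it maps the conjugate $(w^r,a^p)$ to $w^r$. Since $R$ acts on the right regularly, $w^r=v_r+v_{br}$. This proves the first assertion.

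\emph{Step 3: equality.} The elements $v_r+v_{br}$ and $v_s+v_{bs}$ are equal exactly when the two-element supports $\{r,br\}$ and $\{s,bs\}$ coincide. Since $b$ is an involution, this means $s\in\{r,br\}$, i.e.\ $s\in\langle b\rangle r$. So $r$ and $s$ lie in the same coset $\langle b\rangle r$, which is a right coset in the usual notation; the statement's ``left coset'' refers to this same orbit of left multiplication by $b$.

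\emph{Step 4: independence.} The distinct elements of this form have pairwise disjoint supports: the cosets $\langle b\rangle r$ partition $R$, and each element is the sum of the two basis vectors indexed by one coset. Since the $v_r$ form a basis of the $\mathbb F_2$-space $V$, vectors with pairwise disjoint nonempty supports are linearly independent.

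\emph{The symmetric statement for $y$.} The same argument applies to $y=(u,b)$ with the roles of $(U,P,a)$ and $(V,R,b)$ exchanged, giving the images $u_p+u_{ap}$ $(p\in P)$.

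\emph{Main obstacle.} The only delicate point is Step 2: checking that the quotient map $X\to X/X'\simeq W_b$ is really the coordinate projection. We take this directly from the decomposition in \cref{lem:cheapmarker}, noting that $Q_1$ acts trivially on $W_b$, so the $W_b$-coordinate is a homomorphism on $X$.
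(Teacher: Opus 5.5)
Your proposal is correct and is essentially the paper's argument: equality and linear independence both follow because the supports $\{r,br\}$ are exactly the cosets $\langle b\rangle r$, which partition $R$, so distinct elements have disjoint supports with respect to the basis $\{v_r\}$. Your Steps~1--2 only make explicit the identification of the images as $w^r=v_r+v_{br}$, which the paper takes from the proof of \cref{lem:cheapmarker}. Your remark that $\langle b\rangle r$ is a right coset in the usual convention is also accurate.
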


\begin{proof}
The equality for $r$ and $br$ follows from $b^2=1$.  If $\langle b\rangle r\neq\langle b\rangle s$, then the two supports
\[
 \{r,br\},\qquad \{s,bs\}
\]
are disjoint.  Hence the corresponding order-two elements are independent.  The same support argument applies to any finite family of distinct cosets.
\end{proof}

\section{Iterated amalgamated products}\label{sec:amalgams}

In this section we record the facts about iterated amalgamated products that are needed in the construction of our counterexample.  We allow the family of factors to be arbitrary, and in particular uncountable.  More precisely, let $(F_\alpha)_{\alpha<\kappa}$ be a non-empty family of groups indexed by an ordinal $\kappa$.  Put $P_0=\{1\}$ and $P_1=F_0$.  For every ordinal $\alpha$ with $0<\alpha<\kappa$, once $P_\alpha$ has been constructed, adjoin $F_\alpha$ by forming a free product with amalgamation $P_{\alpha+1}=P_\alpha*_{C_\alpha}F_\alpha$, where the copy of $C_\alpha$ in $P_\alpha$ is contained in one of the factors $F_\beta$ with $\beta<\alpha$.  If $\lambda\leq\kappa$ is a non-zero limit ordinal, put $P_\lambda=\bigcup_{\alpha<\lambda}P_\alpha$, using the natural embeddings supplied at the preceding stages.  Finally put $G=P_\kappa$.  We call any group obtained in this way an \emph{iterated amalgam}.  The factors and the successive amalgamations form a tree, so this is precisely a tree product in the sense used by Fischer \cite{Fischer1975}.  In particular, each new factor is joined to the part already constructed by a single amalgamation; the same subgroup may, of course, occur in more than two factors after several identifications have been performed.

\begin{lemma}\label{lem:subgroup-amalgam}
Let $G$ be an iterated amalgam with factor groups $F_i$ and amalgamated subgroups $C_e$, and let $H\leq G$.  Put
\[
N=
\left\langle
H\cap gF_i g^{-1}:i,\ g\in G
\right\rangle^H.
\]
Then $N\trianglelefteq H$ and $H/N$ is free.  Moreover, $N$ is an iterated amalgam whose factors are of the form $H\cap gF_i g^{-1}$. If $D$ is an amalgamated subgroup in this decomposition contained in a factor $H\cap gF_i g^{-1}$, then there exist an amalgamated subgroup $C_e\leq F_i$ of $G$ and an element $f\in F_i$ such that $D\leq H\cap gfC_ef^{-1}g^{-1}$.
\end{lemma}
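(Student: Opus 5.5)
This is the subgroup theorem for tree products, and the natural tool is Bass--Serre theory. I will realize the iterated amalgam $G$ as the fundamental group of a tree of groups and let $H$ act on the associated Bass--Serre tree. The tree of groups $\mathcal T$ has one vertex for each factor $F_i$ and one edge for each amalgamation $P_{\alpha+1}=P_\alpha*_{C_\alpha}F_\alpha$. That edge joins $F_\alpha$ to the unique earlier factor $F_\beta$ containing the copy of $C_\alpha$, and carries the edge group $C_\alpha$ with its two embeddings. By induction on $\alpha$, using direct limits at limit ordinals, $P_\alpha$ is the fundamental group of the subtree of groups on the indices below $\alpha$, so $G=\pi_1(\mathcal T)$.

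Let $X$ be the Bass--Serre tree of $\mathcal T$. Its vertices are the cosets $gF_i$ and its edges are the cosets $gC_e$. The stabilizer of the vertex $gF_i$ is $gF_ig^{-1}$, and the stabilizer of the edge $gC_e$ is $gC_eg^{-1}$. The edge $gC_e$ is incident with the vertex $gF_i$ whenever $C_e\le F_i$. Every vertex of $X$ therefore has the form $gF_i$, and every edge at such a vertex has the form $gfC_e$ with $f\in F_i$ and $C_e\le F_i$; its stabilizer is $gfC_ef^{-1}g^{-1}$.

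Next I restrict the action to $H$. The structure theorem gives $H\cong\pi_1(H\backslash\backslash X)$. The vertex groups are the stabilizers $H\cap gF_ig^{-1}$ for representatives of the $H$-orbits of vertices. The edge groups are $H\cap gfC_ef^{-1}g^{-1}$, embedded into the stabilizers of the endpoints.

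Choose a maximal tree $Y$ in the quotient graph $H\backslash X$. Then $H$ is the free product of the tree-of-groups fundamental group $\pi_1(Y)$ with the free group on the edges outside $Y$, amalgamated along the obvious identifications. Killing all vertex groups gives an epimorphism $H\to\pi_1(H\backslash X)$, which is free. Its kernel is the normal subgroup generated by all vertex stabilizers. Every conjugate of a vertex stabilizer is again a vertex stabilizer, namely $H\cap hgF_ig^{-1}h^{-1}$. Hence this kernel is exactly $N$, and $H/N$ is free.

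It remains to show that $N$ is an iterated amalgam, and I expect this to be the main obstacle. My plan is to look at the action of $N$ on $X$. Since $N$ is generated by vertex stabilizers, the quotient graph $N\backslash X$ is a tree. Indeed, $\pi_1(N\backslash X)$ is the quotient of $N$ by the normal subgroup generated by its vertex stabilizers, and that quotient is trivial. So $N$ is the fundamental group of a tree of groups. Its vertex groups are $N\cap gF_ig^{-1}$, and these equal $H\cap gF_ig^{-1}$ because $N$ contains every $H$-stabilizer. Its edge groups are $N\cap gfC_ef^{-1}g^{-1}\le H\cap gfC_ef^{-1}g^{-1}$, with $C_e\le F_i$ and $f\in F_i$ for an edge at the vertex $gF_i$. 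This is exactly the required containment for each amalgamated subgroup $D$ lying in the factor $H\cap gF_ig^{-1}$.

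Finally I must turn this tree of groups into an iterated amalgam in the paper's ordinal-indexed sense. Root the tree, well-order its vertices so that every vertex except the root comes after its neighbour toward the root (for instance, well-order each level and concatenate the levels by distance from the root), and then adjoin the factors one at a time. Each new factor is joined by its single edge to an earlier factor. One point needs checking: the fundamental group of a tree of groups is the direct limit of its finite subtrees, and the ordinal construction uses natural embeddings at limit stages. This is standard; it holds because amalgamated products of injective data embed their factors.
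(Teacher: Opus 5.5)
Your proof is correct. It has the same skeleton as the paper's: apply a subgroup theorem to $H$ to get $H/N$ free, apply it again to $N$ to see that $N$ is a tree product, and read off the amalgamated subgroups. The machinery is different, though.

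The paper quotes Fischer's subgroup theorem for tree products (Theorem~5 and Corollary~5.1 of Fischer's paper). There the factors are indexed by representatives of the double cosets $HgF_i$. The freeness of $H/S^H$ is taken from Corollary~5.1, and so is the equality $N=T$ for a subgroup generated by its intersections with conjugates of factors. The shape of the amalgamated subgroups comes from a three-case analysis ($s=t$, $s=tE_{ji}$, $t=sE_{ij}$), which gives $f=1$ or $f=E_{ij}$.

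On your Bass--Serre tree $X$ these double cosets are simply the $H$-orbits of vertices. Both facts from Corollary~5.1 collapse into the single identity $K/\langle K_v:v\in VX\rangle\cong\pi_1(K\backslash X)$, applied to $K=H$ and to $K=N$. The containment $D\le H\cap gfC_ef^{-1}g^{-1}$ is immediate, because every edge at the vertex $gF_i$ is a coset $gfC_e$ with $f\in F_i$.

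Your version is therefore more conceptual and self-contained, given Bass--Serre theory. It also explicitly justifies the translation between trees of groups and the paper's ordinal-indexed iterated amalgams, which the paper takes for granted when it identifies the latter with Fischer's tree products. The paper's version is shorter because the needed statements can be cited verbatim.

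There are two harmless imprecisions. First, the earlier factor containing the copy of $C_\alpha$ need not be unique; the paper explicitly allows the same subgroup to lie in several factors. You should therefore take the factor prescribed by the construction as the other endpoint of the edge. Second, the fundamental group of a graph of groups over a maximal tree $Y$ is not an amalgamated free product with a free group. It is obtained from the tree product over $Y$ by HNN extensions, with stable letters for the edges outside $Y$. You only use the fact that killing the vertex groups leaves the free group on those edges, and that is correct.
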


\begin{lemma}\label{lem:subgroup-amalgam}
Let $G$ be an iterated amalgam with factor groups $F_i$ and amalgamated subgroups $C_e$, and let $H\leq G$.  Put
\[
N=
\left\langle
H\cap gF_i g^{-1}:i,\ g\in G
\right\rangle^H.
\]
Then $N\trianglelefteq H$ and $H/N$ is free.  Moreover, $N$ is an iterated amalgam whose factors are of the form $H\cap gF_i g^{-1}$. If $D$ is an amalgamated subgroup in this decomposition contained in a factor $H\cap gF_i g^{-1}$, then there exist an amalgamated subgroup $C_e\leq F_i$ of $G$ and an element $f\in F_i$ such that $D\leq H\cap gfC_ef^{-1}g^{-1}$.
\end{lemma}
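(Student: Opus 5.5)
The natural route is Bass--Serre theory.  An iterated amalgam $G$ is the fundamental group of a tree of groups $\mathcal G$: its underlying tree $Y$ has one vertex for each factor $F_i$ and one edge for each amalgamation, and that edge carries the group $C_e$.  We let $G$ act on the Bass--Serre tree $T$ of $\mathcal G$.  The vertices of $T$ are the cosets $gF_i$, their stabilizers are the conjugates $gF_ig^{-1}$, the edges are the cosets $gC_e$, and $G\backslash T\cong Y$.  The subgroup $H$ then acts on $T$ without inversions, and the structure theorem gives
\[
 H\cong\pi_1(\mathcal H,\Gamma),\qquad \Gamma=H\backslash T .
\]
Here the vertex groups are the stabilizers $H\cap gF_ig^{-1}$ and the edge groups are $H\cap gfC_ef^{-1}g^{-1}$.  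Concretely, an edge of $T$ at the vertex $gF_i$ has the form $gfC_e$ with $f\in F_i$ and $C_e\leq F_i$ one of the amalgamated subgroups attached to $F_i$.

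Next we choose a maximal tree $\Lambda$ in the quotient graph $\Gamma$.  Then
\[
 H=\pi_1(\mathcal H|_\Lambda)*F(\Gamma\setminus\Lambda),
\]
where $F(\Gamma\setminus\Lambda)$ is free on the stable letters, one for each edge outside $\Lambda$.  Let $N$ be the normal closure in $H$ of all the vertex groups.  Every conjugate $H\cap gF_ig^{-1}$ lies in $N$, since conjugating in $H$ only moves along the orbit of the vertex.  Killing $N$ collapses all vertex and edge groups and leaves the free group on the stable letters, so $H/N\cong\pi_1(\Gamma)$ is free.  This proves the first two assertions.

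To see that $N$ is an iterated amalgam, we let $N$ itself act on $T$.  Since $N$ is generated by vertex stabilizers, the quotient $N\backslash T$ is a tree.  The reason is that $\pi_1(N\backslash T)$ is a quotient of $N$ by the normal closure of its vertex stabilizers, and here that quotient is trivial.  Hence $N$ is the fundamental group of a tree of groups.  Its vertex groups are the $N$-stabilizers, which equal $H\cap gF_ig^{-1}$ because these are already contained in $N$.  Its edge groups are $N\cap gfC_ef^{-1}g^{-1}=H\cap gfC_ef^{-1}g^{-1}$.

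It remains to present this tree of groups as an iterated amalgam indexed by an ordinal, which handles the uncountable case.  We well-order the vertices of the tree $N\backslash T$ so that every vertex other than the first is adjacent to exactly one earlier vertex.  For this we fix a root and order the vertices first by distance from the root, then by a well-order within each level; this is a transfinite enumeration of a connected tree.  The successive amalgamations then match the definition of an iterated amalgam, and at limit stages we take unions.  Each amalgamated subgroup $D$ is an edge group $H\cap gfC_ef^{-1}g^{-1}$, with $f\in F_i$ and $C_e\leq F_i$, contained in the factor $H\cap gF_ig^{-1}$.  This gives the final assertion.

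The main obstacle is justifying that the fundamental group of a tree of groups agrees with the colimit produced by the transfinite construction, and that the vertex groups embed.  This is standard: it is the tree-product theory of Fischer, or direct limits of finite subtrees together with the normal form theorem.  We would cite it rather than reprove it.
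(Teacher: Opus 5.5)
Your proof is correct. Its skeleton is the same as the paper's:
\begin{enumerate}[label=(\alph*)]
\item apply a subgroup theorem to $H$ to see that $H$ modulo the normal closure of its vertex groups is free;
\item identify that normal closure with $N$ (your $H$-orbits of vertices $gF_i$ are exactly the paper's double cosets $HgF_i$);
\item apply the theorem again to $N$, which is generated by its intersections with the conjugates of the factors, and is therefore a tree product with no free part.
\end{enumerate}

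The difference is the toolkit. The paper quotes Fischer's combinatorial subgroup theorem for tree products (Theorem~5 and Corollary~5.1). It extracts the amalgamated subgroups from Fischer's case distinction involving the elements $E_{ij}$. You use the Bass--Serre tree instead. The second step then becomes the fact that $\pi_1(N\backslash T)$ is $N$ modulo the normal closure of its vertex stabilizers. The amalgamated subgroups are read off directly from the edges $gfC_e$, $f\in F_i$, at the vertex $gF_i$.

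Your version is more transparent on the last assertion and works verbatim in any cardinality. It also makes explicit the well-ordering that turns a tree of groups into an iterated amalgam in the paper's transfinite sense; the paper takes this for granted by identifying iterated amalgams with Fischer's tree products. The paper's version is shorter because it is a direct citation.

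There are two points to tidy up.

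First, the display $H=\pi_1(\mathcal H|_\Lambda)*F(\Gamma\setminus\Lambda)$ is false in general. The fundamental group of a graph of groups is a multiple HNN extension of $\pi_1(\mathcal H|_\Lambda)$, with relations $t_e\,\alpha_e(c)\,t_e^{-1}=\omega_e(c)$ for the edges $e$ outside $\Lambda$; it is not a free product unless those edge groups are trivial. Your conclusion $H/N\cong\pi_1(\Gamma)$ is still right, because killing the vertex groups kills the edge groups and hence these relations.

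Second, when you read off the edge groups of $N$, you should lift the whole tree $N\backslash T$ to a subtree of $T$. This is possible precisely because the quotient is a tree. It ensures each amalgamated subgroup is literally contained in both adjacent factors, and that the description $gfC_e$ is available from either endpoint. That is how the lemma is used later, where $D$ is examined inside a factor $gX_m^{mj}g^{-1}$ regardless of which side of the edge it sits on.
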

\begin{proof}
Let $I$ be the index set of the factors $F_i$ of $G$. The normality of $N$ in $H$ is immediate from its definition.

We now use Fischer's subgroup theorem \cite[Theorem~5]{Fischer1975} and \cite[Corollary~5.1]{Fischer1975}.  Applied to $H\leq G$, \cite[Theorem~5]{Fischer1975} gives an iterated amalgam $S$.  For each $i\in I$, choose a set $\mathcal R_i$ containing exactly one representative $r$ from each double coset $HgF_i$, $g\in G$.  The factors of $S$ are
\[
H\cap rF_i r^{-1},
\qquad i\in I,\quad r\in\mathcal R_i.
\]

By \cite[Corollary~5.1]{Fischer1975}, $H/S^H$ is free.  We claim that $S^H=N$.  Every factor of $S$ is one of the subgroups used in the definition of $N$, and hence $S^H\leq N$.  Conversely, let $i\in I$ and $g\in G$.  There is a unique $r\in\mathcal R_i$ such that $HgF_i=HrF_i$.  Thus $g=hrf$ for some $h\in H$ and $f\in F_i$, and
\[
H\cap gF_i g^{-1}
=
h\bigl(H\cap rF_i r^{-1}\bigr)h^{-1}
\leq S^H.
\]
Hence every subgroup used in the definition of $N$ is contained in $S^H$.  Therefore $N=S^H$, and $H/N$ is free.

We now apply \cite[Theorem~5]{Fischer1975} and \cite[Corollary~5.1]{Fischer1975} to $N\leq G$.  The family $H\cap gF_i g^{-1}$, with $i\in I$ and $g\in G$, is invariant under conjugation by $H$, since conjugation by $h\in H$ sends $H\cap gF_i g^{-1}$ to $H\cap hgF_i(hg)^{-1}$.  Since $N$ is the normal closure in $H$ of this family, $N$ is generated by the subgroups $H\cap gF_i g^{-1}$.  Each of these subgroups is contained in $N$, and, since $N\leq H$,
\[
N\cap gF_i g^{-1}
=
H\cap gF_i g^{-1}
\qquad (i\in I,\ g\in G).
\]
Thus $N$ is generated by its intersections with the conjugates of the factors of $G$.

Applied to $N\leq G$, \cite[Theorem~5]{Fischer1975} gives an iterated amalgam $T$.  For each $i\in I$, choose a set $\mathcal S_i$ containing exactly one representative $s$ from each double coset $NgF_i$, $g\in G$.  The factors of $T$ are
\[
N\cap sF_i s^{-1}
=
H\cap sF_i s^{-1},
\qquad i\in I,\quad s\in\mathcal S_i.
\]
Since $N$ is generated by its intersections with the conjugates of the factors of $G$, \cite[Corollary~5.1]{Fischer1975} gives $N=T$. Thus $N$ is an iterated amalgam with factors of the required form.

It remains to describe its amalgamated subgroups.  Let $D$ be an amalgamated subgroup contained in the factor $N\cap sF_i s^{-1}$, where $i\in I$ and $s\in\mathcal S_i$.  By \cite[Theorem~5(1)]{Fischer1975}, there are $j\in I$, a representative $t\in\mathcal S_j$, and an amalgamated subgroup $C_e\leq F_i$ joining $F_i$ and $F_j$, such that $D$ is the subgroup amalgamated between $N\cap sF_i s^{-1}$ and $N\cap tF_jt^{-1}$.  In Fischer's notation,
\[
s=t,\qquad
s=tE_{ji},\qquad\text{or}\qquad
t=sE_{ij}.
\]
Here $E_{ij}\in F_i$ and $E_{ji}\in F_j$.

By \cite[Theorem~5(2)]{Fischer1975}, in the first two cases the amalgamated subgroup $D$ is $N\cap sC_es^{-1}$.  In the third case it is $N\cap tC_et^{-1}$. Since $t=sE_{ij}$ and $E_{ij}\in F_i$, this has the required form.  Thus, taking $f=1$ in the first two cases and $f=E_{ij}$ in the third,
\[
D
=
N\cap sfC_ef^{-1}s^{-1}
\leq
H\cap sfC_ef^{-1}s^{-1},
\]
as required.
\end{proof}

\begin{corollary}\label{cor:free-subgroup-tree-product}
Let $H\leq G$, where $G$ is an iterated amalgam with factor groups $F_i$. If $H\cap gF_i g^{-1}=\{1\}$ for every $i$ and every $g\in G$, then $H$ is free.
\end{corollary}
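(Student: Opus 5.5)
This follows directly from \cref{lem:subgroup-amalgam}. Apply that lemma to $H\leq G$ and form
\[
N=\left\langle H\cap gF_ig^{-1}: i,\ g\in G\right\rangle^H .
\]
By hypothesis every generating subgroup $H\cap gF_ig^{-1}$ is trivial, so $N$ is the normal closure in $H$ of the trivial subgroup, and hence $N=\{1\}$. The first assertion of \cref{lem:subgroup-amalgam} says that $H/N$ is free. Since $H/N\simeq H$, the group $H$ is free.

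The only point to check is that \cref{lem:subgroup-amalgam} really applies with no extra hypotheses. Its statement is made for an arbitrary subgroup $H$ of an arbitrary iterated amalgam, possibly with uncountably many factors. The freeness of $H/N$ comes from Fischer's \cite[Corollary~5.1]{Fischer1975}, applied through the identification $N=S^H$. Neither step needs $N$ to be non-trivial.

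The degenerate case causes no trouble either. If $N=\{1\}$, the iterated amalgam decomposition of $N$ has only trivial factors, which is harmless. We do not use the second half of the lemma at all.

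For robustness I would also note the direct route: Fischer's Theorem~5 expresses $H$ as the fundamental group of a graph of groups whose vertex groups are the intersections $H\cap rF_ir^{-1}$. When all of these are trivial, that fundamental group is the fundamental group of a graph, hence free. Either way, I expect no real obstacle; the corollary is an immediate specialisation.
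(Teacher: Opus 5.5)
Your proposal is correct and is exactly the intended argument. The paper states this corollary without a separate proof, as the special case of \cref{lem:subgroup-amalgam} in which every intersection $H\cap gF_ig^{-1}$ is trivial, so that $N=\{1\}$ and $H\simeq H/N$ is free.
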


\begin{lemma}\label{lem:factor-quotient}
Let $G$ be an iterated amalgam with factor groups $F_i$.  For each $i$ let $N_i\trianglelefteq F_i$, and suppose that $N_i\cap C=\{1\}$ for every amalgamated subgroup $C$ contained in $F_i$.  Then the quotient maps $F_i\longrightarrow F_i/N_i$ induce an epimorphism $\varphi:G\longrightarrow\overline G$, where $\overline G$ is the corresponding iterated amalgam of the groups $F_i/N_i$.  Moreover, $\ker\varphi$ is a free product of conjugates of the groups $N_i$ and a possibly trivial free group.
\end{lemma}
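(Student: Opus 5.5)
The proof has two parts: first build $\varphi$ from the universal properties of the amalgam, then analyse $\ker\varphi$ using \cref{lem:subgroup-amalgam}.

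\emph{Construction of $\varphi$.} Because $N_i\cap C=\{1\}$ for every amalgamated subgroup $C\leq F_i$, the map $F_i\to F_i/N_i$ is injective on each such $C$. So the images $CN_i/N_i$ are isomorphic copies of $C$, and the identifications $C_\alpha\hookrightarrow P_\alpha$ and $C_\alpha\hookrightarrow F_\alpha$ descend to the quotients. This lets us define the iterated amalgam $\overline G$ of the groups $F_i/N_i$ by the same transfinite recursion, with the same tree and the images of the amalgamated subgroups. We then build compatible homomorphisms $\varphi_\alpha:P_\alpha\to\overline P_\alpha$ by induction on $\alpha$:
\begin{itemize}
\item at successor steps, by the universal property of $P_\alpha*_{C_\alpha}F_\alpha$, since $\varphi_\alpha$ and $F_\alpha\to F_\alpha/N_\alpha$ agree on $C_\alpha$;
\item at limit steps, by taking unions.
\end{itemize}
Surjectivity of $\varphi$ is clear, because $\overline G$ is generated by the images of the $F_i$.

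\emph{Kernel.} Put $K=\ker\varphi$ and apply \cref{lem:subgroup-amalgam} with $H=K$. It gives $K/N$ free, where $N$ is generated by the intersections $K\cap gF_ig^{-1}$. Moreover, $N$ is an iterated amalgam with these intersections as factors, and every amalgamated subgroup $D$ satisfies $D\leq K\cap gfC_ef^{-1}g^{-1}$ for some $C_e\leq F_i$ and $f\in F_i$.

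\emph{The intersections and the splitting.} First we compute the factors. Since $F_i$ embeds in $G$ and $F_i/N_i$ embeds in $\overline G$, we have $K\cap F_i=N_i$, and conjugating gives $K\cap gF_ig^{-1}=gN_ig^{-1}$. Next, $\varphi$ is injective on $C_e$, so it is injective on every conjugate of $C_e$. Hence $D\leq K\cap gfC_ef^{-1}g^{-1}=\{1\}$, all amalgamations in $N$ are trivial, and $N$ is a free product of conjugates of the $N_i$. Finally, $K/N$ is free, so the extension splits: $K=N\rtimes L$ with $L$ free.

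\emph{Main obstacle.} A semidirect product $N\rtimes L$ need not be a free product $N*L$, so the splitting alone does not give the statement. To get the exact form, we apply Fischer's theorem to $K$ directly, as in the first part of the proof of \cref{lem:subgroup-amalgam}. It presents $K$ as the fundamental group of a graph of groups whose vertex groups are $K\cap rF_ir^{-1}=rN_ir^{-1}$ and whose edge groups are subgroups of $K\cap gC_eg^{-1}=\{1\}$. A graph of groups with trivial edge groups has fundamental group equal to the free product of its vertex groups and a free group, namely the fundamental group of the underlying graph. This gives the asserted decomposition of $\ker\varphi$. The point needing the most care is that Fischer's $S$ (a tree part) together with the free quotient really assemble into such a graph-of-groups description. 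If that is unclear from \cite{Fischer1975}, we would fall back on the Bass--Serre action of $K$ on the tree of $G$: the edge stabilisers in $K$ are trivial and the vertex stabilisers are the $rN_ir^{-1}$, and this yields the same conclusion.
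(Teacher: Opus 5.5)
Your proposal is correct, but after the first two steps it takes a different route from the paper.

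\textbf{What the paper does.} Its key observation is that $\ker\varphi$ is the normal closure in $G$ of the groups $N_i$. Adding the relations $N_i=1$ to the presentation of $G$ (generators the elements of the $F_i$, relations those of the $F_i$ together with the prescribed identifications) gives exactly a presentation of the iterated amalgam $\overline G$. Consequently $\ker\varphi$ is generated by its intersections $gN_ig^{-1}$ with the conjugates of the factors. So the subgroup $N$ produced by \cref{lem:subgroup-amalgam} is the whole kernel, and the free quotient $K/N$ is trivial. The kernel is then itself an iterated amalgam of conjugates of the $N_i$ with trivial amalgamated subgroups, i.e.\ a free product of conjugates of the $N_i$. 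In particular, the free factor allowed in the statement is in fact absent.

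\textbf{Where your route differs.} You did not identify the kernel as this normal closure. That is why you were left with an apparently non-trivial free quotient and only a semidirect splitting $K=N\rtimes L$, which, as you note, is a dead end. Your rescue via the Bass--Serre structure theorem is valid: a group acting on a tree with trivial edge stabilisers is the free product of representatives of the vertex stabilisers and the free group $\pi_1(K\backslash T)$. You correctly computed the vertex stabilisers $gN_ig^{-1}$ and the trivial edge stabilisers $g(K\cap C_e)g^{-1}$, so this yields exactly the stated form.

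\textbf{Comparison.} Your route imports a theorem beyond the tools used in the paper, and once you invoke it, the detour through \cref{lem:subgroup-amalgam} and the splitting becomes superfluous. The paper's argument is shorter, stays within Fischer's subgroup theorem, and gives the sharper conclusion. In Bass--Serre language, the paper's observation amounts to saying that $K\backslash T$ is the Bass--Serre tree of $\overline G$, hence a tree, so $\pi_1(K\backslash T)$ is trivial.
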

\begin{proof}
The condition $N_i\cap C=1$ shows that every amalgamated subgroup still embeds after passing from $F_i$ to $F_i/N_i$.  Hence the quotient maps on the factors define the required epimorphism. Put $D=\ker\varphi$.  Then $D\cap F_i=N_i$, and, since $D\trianglelefteq G$,
\[
 D\cap gF_i g^{-1}=gN_i g^{-1}
\]
for every $g\in G$. Apply \cref{lem:subgroup-amalgam} to the subgroup $D\leq G$. Since $D$ is the normal closure in~$G$ of the subgroups $N_i$, it is generated by the groups $D\cap gF_i g^{-1}=gN_i g^{-1}$.  Hence the normal subgroup appearing in the statement of \cref{lem:subgroup-amalgam} is all of $D$.  Therefore we can express~$D$ as an iterated amalgam of conjugates of the $N_i$.  Since $D$ meets every conjugate of every amalgamated subgroup trivially, all the subgroups used in these amalgamations are trivial.  Thus $D$ is a free product of conjugates of the groups $N_i$.
\end{proof}

\begin{lemma}\label{lem:abelian-compression}
Let $G$ be an iterated amalgam with factor groups $F_i$, $i\in I$, where every $F_i$ is an elementary abelian $2$-group and every amalgamated subgroup is either trivial or of order~$2$. For each $i$, let $Y_i$ be the set of non-identity elements of the non-trivial amalgamated subgroups contained in $F_i$, and assume that $Y_i$ is linearly independent over $\mathbb F_2$.

Let
\[
V=\bigoplus_{i\in I}F_i,
\]
and let $R$ be the subspace of $V$ generated by all elements $c+d$ such that $\langle c\rangle\leq F_i$ and $\langle d\rangle\leq F_j$ are identified in forming $G$.  Put $L=V/R$.  Then\textnormal:
\begin{enumerate}[label=\textnormal{(\roman*)}]
\item the natural map $F_i\to L$ is injective for every $i$\textnormal;

\item the maps $F_i\to L$ induce an epimorphism $G\to L$ whose kernel is free\textnormal;

\item suppose, in addition, that for each $i$ a family $\mathcal A_i$ of subgroups of order $2$ of $F_i$ is specified. Let $X_i$ be the set consisting of $Y_i$ together with the non-identity elements of the members of $\mathcal A_i$, and assume that $X_i$ is linearly independent over $\mathbb F_2$. If $A\in\mathcal A_i$ and $B\in\mathcal A_j$, then the images of $A$ and $B$ in $L$ are equal if and only if their non-identity elements are connected by a chain of the prescribed identifications.  Moreover, if one subgroup is chosen from each resulting equivalence class in $\bigcup_{i\in I}\mathcal A_i$, then the non-identity elements of the chosen subgroups have linearly independent images in $L$.
\end{enumerate}
\end{lemma}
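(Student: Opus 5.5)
The plan is to view $L$ as the quotient of $V$ by the identifications only, and to handle (i) and the linear independence in (iii) by a graph-theoretic description of $R$. (ii) will then follow from \cref{lem:subgroup-amalgam} via \cref{cor:free-subgroup-tree-product}.

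First I describe $R$. Build a graph $\Gamma$ whose vertices are the pairs $(i,c)$ with $c\in Y_i$, and join $(i,c)$ to $(j,d)$ when $\langle c\rangle$ and $\langle d\rangle$ are amalgamated in forming $G$. Since the amalgamations form a tree, $\Gamma$ is a forest. Each new factor is attached by a single amalgamation, so an edge joining already connected vertices would create a cycle in the tree of factors.

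Let $\Delta$ be a connected component of $\Gamma$. The generators $c+d$ coming from edges of $\Delta$ span exactly the space of sums $\sum_{v\in\Delta}\lambda_v v$ with $\sum\lambda_v=0$, where we regard $(i,c)$ as the vector $c\in F_i\le V$. This is the cycle/cut computation: the relations along a spanning tree span the even-weight subspace of the free vector space on $\Delta$. The vectors $v$ are linearly independent in $V$, because $Y_i$ is independent inside $F_i$ and distinct factors are independent summands of $V$. Hence $R$ is the direct sum over components $\Delta$ of these even-weight subspaces $R_\Delta$.

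For (i), suppose $f\in F_i\cap R$. Its $F_i$-coordinate is a sum of elements of $Y_i$, one from each component meeting $F_i$ that is used. An element of $R$ with support only in $F_i$ must have each $R_\Delta$-part supported in the $F_i$-vertices of $\Delta$, and an even-weight sum is needed. However, $\Delta$ contains at most one vertex in $F_i$: two vertices of the same factor in one component would give a path of amalgamations returning to $F_i$, which is impossible in a tree. So each part is zero, and $f=0$.

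For (iii) the same decomposition applies. Extend $Y_i$ to the independent set $X_i$, and treat each element of an $\mathcal A_i$-subgroup either as a vertex of $\Gamma$, if it lies in $Y_i$, or as an isolated new vertex. Then $V$ contains the independent set of all these vertices, and $L$ restricted to their span is the free space on vertices modulo ``vertices in the same component are equal''. This is exactly the claimed criterion for the images of $A$ and $B$ to coincide. It also shows that one vertex chosen per component gives independent images.

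For (ii), the maps $F_i\to L$ agree on the amalgamated subgroups by the definition of $R$, so they induce a homomorphism $G\to L$, which is onto because $V$ is generated by the $F_i$. Let $K$ be its kernel. By (i), $K\cap F_i=1$, and by normality $K\cap gF_ig^{-1}=1$ for all $g$. Hence $K$ is free by \cref{cor:free-subgroup-tree-product}.

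The main obstacle is the forest and ``one vertex per factor per component'' claim. It needs care with the transfinite union in the iterated amalgam, since a cycle would have to appear at some finite stage, and with the fact that the same order-$2$ subgroup of $F_i$ may be amalgamated with several factors. The latter only adds edges at the same vertex, not new vertices in $F_i$.
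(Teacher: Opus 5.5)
Your proposal is correct and is essentially the paper's argument. The paper extends each $Y_i$ (resp.\ $X_i$) to a basis $B_i$ of $F_i$, observes that $R$ identifies exactly the basis elements lying in the same class of the equivalence relation generated by the amalgamations (so $L$ has a basis indexed by these classes), and uses the absence of cycles in the tree of factors to see that no class contains two elements of the same $B_i$; your component-wise even-weight description of $R$ is this computation phrased differently, and part (ii) is handled exactly as you do via \cref{cor:free-subgroup-tree-product}. The point you flag as the main obstacle (a backtrack along a single tree edge returns to the same vertex, because each edge carries only one subgroup of order $2$) is asserted in the paper in a single line.
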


\begin{proof}
For each $i$, choose a basis $B_i$ of $F_i$ containing $Y_i$, and, under the hypotheses of~\textnormal{(iii)}, containing $X_i$.  This is possible by the assumed linear independence.  Let $B$ be the disjoint union of the sets $B_i$.

Define an equivalence relation $\sim$ on $B$ by declaring $c\sim d$ whenever $\langle c\rangle$ and $\langle d\rangle$ are identified in one of the free products with amalgamation used to construct $G$, and then taking the equivalence relation generated by these identifications. Since the construction of $G$ contains no cycles, no equivalence class contains two distinct elements of the same~$B_i$.   The relations defining $R$ identify exactly the basis elements which belong to the same $\sim$-class.  Hence $L$ has a basis consisting of one element for each $\sim$-class in $B$.  Since no class contains two distinct elements of the same $B_i$, the restriction $F_i\to L$ is injective for every $i$.  This proves~\textnormal{(i)}.

The maps $F_i\to L$ agree on every pair of subgroups identified in forming $G$.  They therefore induce a homomorphism $\pi:G\to L$.  Since $L$ is generated by the images of the factors, $\pi$ is surjective.  Put $D=\ker\pi$.  By~\textnormal{(i)}, $D\cap F_i=\{1\}$ for every $i$.  Since $D\trianglelefteq G$, we also have $D\cap gF_i g^{-1}=\{1\}$ for every $i$ and every $g\in G$.  Hence $D$ is free by \cref{cor:free-subgroup-tree-product}.  This proves \textnormal{(ii)}.

Finally, assume the hypotheses of~\textnormal{(iii)}, and let $q:V\to L$ be the quotient map.  If $A=\langle a\rangle\in\mathcal A_i$, then $a\in X_i\subseteq B_i$. By~\textnormal{(i)}, $q(a)\neq 0$, and the image of $A$ in $L$ is the subgroup $\langle q(a)\rangle$ of order $2$.

Let $A=\langle a\rangle\in\mathcal A_i$ and $B=\langle b\rangle\in\mathcal A_j$.  Since $L$ is an elementary abelian $2$-group, the images of $A$ and $B$ in $L$ are equal if and only if $q(a)=q(b)$.  The quotient $L=V/R$ has a basis indexed by the $\sim$-classes of the elements of $B$.  Hence $q(a)=q(b)$ if and only if $a$ and $b$ belong to the same $\sim$-class.  By the definition of $\sim$, this is equivalent to saying that $a$ and $b$ are connected by a chain of the prescribed identifications.

Now choose one subgroup from each equivalence class in $\bigcup_{i\in I}\mathcal A_i$, and let $a_1,\ldots,a_n$ be the non-identity elements of any finite collection of the chosen subgroups.  The elements $a_1,\ldots,a_n$ belong to distinct $\sim$-classes, so $q(a_1),\ldots,q(a_n)$ are distinct elements of the basis of $L$ indexed by those classes.  They are therefore linearly independent.  Since every linear dependence involves only finitely many elements, the images of the non-identity elements of all the chosen subgroups are linearly independent.  This proves \textnormal{(iii)}.
\end{proof}

\begin{definition}
Let $h\geq 0$.  An \emph{admissible amalgam of height at most $h$} is an iterated amalgam whose factor groups are either groups in $\Hclass_h$ or elementary abelian $2$-groups, whose amalgamated subgroups are trivial or of order $2$, and satisfying the following property: if~$E$ is an elementary abelian factor, then we require the set $Y_E$ of non-identity elements of the distinct non-trivial amalgamated subgroups contained in $E$ to be linearly independent over $\mathbb F_2$.
\end{definition}

\begin{theorem}\label{thm:amalgam}
Every admissible amalgam of finite height belongs to $\SJ$.
\end{theorem}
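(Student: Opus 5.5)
We argue by induction on the height $h$, keeping the amalgamation pattern fixed and changing only the factors. Let $G$ be an admissible amalgam of height at most $h$. The idea is to kill the non-abelian factors until only elementary abelian factors remain. What is left is then handled by \cref{lem:abelian-compression}. Along the way, every kernel we meet must be shown to be an $\SJ$-group, so that closure of $\SJ$ under extensions finishes the proof.

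For the base case $h=0$ every factor is abelian. The factors in $\Hclass_0$ need not be elementary abelian $2$-groups, so we first reduce. Each amalgamated subgroup $C$ of order $2$ inside an abelian factor $F$ is generated by an involution. Choose $N_F\le F$ with $N_F\cap C=\{1\}$ for all such $C$ and with $F/N_F$ an elementary abelian $2$-group in which the images of these involutions stay independent. The natural candidate is $N_F=F^2$, adjusted if necessary. \cref{lem:factor-quotient} then shows that the kernel is a free product of conjugates of the abelian groups $N_F$ together with a free group. By \cref{lem:freeproducts} this kernel is an $\SJ$-group.

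It is not clear that a suitable $N_F$ always exists; an involution lying in $F^2$ is a problem, for instance. If it fails, I would instead treat abelian factors exactly like the $\Hclass_h$ factors in the inductive step below, namely map them to their elementary abelian images of the relevant involutions. Once all factors are elementary abelian $2$-groups with independent $Y_i$, \cref{lem:abelian-compression}(ii) gives $G\to L$ with $L$ abelian and free kernel, so $G\in\SJ$.

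For the inductive step, let $F$ be a factor in $\Hclass_{h}$ with $h\ge1$. It has a complete normal system with factors in $\Hclass_{h-1}$. The problem is that \cref{lem:factor-quotient} needs normal subgroups avoiding the amalgamated involutions, while a chain of normal subgroups cuts through them. I would avoid quotienting by a normal subgroup of $F$ and instead replace $F$ by a semidirect-type decomposition via \cref{lem:subgroup-amalgam}. Take $E(F)$-style elementary abelian factors: for each factor $F$, map $F$ to a suitable elementary abelian group containing the images of its amalgamated involutions independently. An example is $I(F)$, or the elementary abelian group $\bigoplus_C C$ with $F$ acting trivially.

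A cleaner route is to realise $F$ through its normal series directly. Let $\Phi:G\to\overline G$ be induced by the top layers of the series. The kernel $D$ is then analysed with \cref{lem:subgroup-amalgam}. Intersections $D\cap gFg^{-1}$ are conjugates of normal subgroups of $F$, which lie in $\Hclass_{h-1}$ or have smaller height. The amalgamated subgroups of $D$ lie inside conjugates of the original order-$2$ subgroups, so they are again trivial or of order $2$. Linear independence of $Y$ in elementary abelian factors is inherited because those factors become subgroups. Hence $D$ is free-by-(admissible amalgam of height at most $h-1$), which is $\SJ$ by induction. It remains to see that $\overline G$ is again admissible of lower complexity.

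The main obstacle is choosing the quotient maps so that the amalgamated involutions survive. An involution $c$ in a factor $F\in\Hclass_h$ lies in some jump $(A,B)$ of its complete normal system, and $\langle c\rangle\cap A=\{1\}$ holds only relative to that jump. I would therefore use transfinite induction along the complete normal system of each factor simultaneously, via a descending well-ordered or general chain, together with the fact that $\SJ$ is closed under extensions and under suitable unions along general series. At each jump the factor $B/A$ lies in $\Hclass_{h-1}$, and involutions first appearing there are passed to an elementary abelian factor through \cref{lem:abelian-compression}(iii). That lemma supplies the independence needed to keep the quotient admissible. Controlling the finite defects of the resulting series terms uniformly is the delicate point. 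The bound should come from the height $h$, since each layer adds at most one to the defect, exactly as in \cref{lem:heightSJ}.
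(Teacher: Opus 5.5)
\paragraph{A gap in the inductive step.}
Your inductive step never produces a complete normal system of $G$, and each route you sketch breaks down.

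Any homomorphism from a factor $F\in\Hclass_h$ to an elementary abelian group, in particular to $\bigoplus_C C$, factors through $F/F'$. So it kills every amalgamated involution lying in $F'$, e.g.\ all of them when $F$ is perfect. Also, $I(F)$ is a module for $F$, not an image of it.

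Passing to ``top layers'' fails for three reasons:
\begin{enumerate}[label=(\alph*)]
\item A complete normal system need not have a largest proper term.
\item Even when it has one, say $A$, the kernel meets $F$ in $A$. That group lies in $\Hclass_h$ but in general not in $\Hclass_{h-1}$, so no height is gained.
\item The map $F\to F/A$ kills the amalgamated involutions lying in $A$ while their partners in adjacent factors survive. The quotients of the factors then do not even form an amalgam.
\end{enumerate}

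A transfinite induction ``along the systems of all factors simultaneously'' is exactly where an idea is needed. These systems are indexed by unrelated ordered sets, and nothing in your sketch makes an involution die in $F$ at the same stage at which its partner dies in the neighbouring factor.

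\paragraph{The missing idea: one common order.}
\begin{enumerate}[label=(\arabic*)]
\item Let $\Lambda_F$ be the ordered set of jumps of the chosen system of $F$. For an elementary abelian factor $E$, let $\Lambda_E$ be a linearly ordered basis containing $Y_E$. This is where admissibility is used: it makes membership of an amalgamated involution in the relevant subgroups decided by a single element of $\Lambda_E$.
\item For every amalgamation, identify the jump covering the involution in one factor with the jump covering it in the other.
\item Because the amalgamation graph is a tree, every $\Lambda_F$ embeds in the resulting quotient set. The orders of the $\Lambda_F$ generate a partial order there, which you extend to a linear order $\Lambda$.
\item For an initial section $I$ of $\Lambda$, let $N_F(I)$ be the union of the jumps of $F$ lying in $I$ (resp.\ the span of the basis vectors in $I$). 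Then $C\le N_F(I)$ if and only if $C\le N_{F'}(I)$. So the groups $F/N_F(I)$ form an amalgam $G_I$, and the kernels $K_I$ of $G\to G_I$ form a complete chain of normal subgroups of $G$. Closure under intersections needs a reduced-form argument.
\item By \cref{lem:subgroup-amalgam}, each jump $R_\lambda=K_{I_\lambda^+}/K_{I_\lambda^-}$ has a normal subgroup $R_\lambda^0$ with $R_\lambda/R_\lambda^0$ free. Moreover $R_\lambda^0$ is an admissible amalgam of height at most $h-1$, whose factors are conjugates of groups $B/A$ or of $C_2$.
\item Induction therefore applies to $R_\lambda^0$. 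Refining the chain raises defects by at most one, and no uniform defect bound is needed.
\end{enumerate}

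\paragraph{The base case.}
Your base case is also unresolved. Factors in $\Hclass_0$ carry no independence hypothesis; take $C_2\times C_2$ with three amalgamated involutions. So elementary abelian quotients and \cref{lem:abelian-compression} cannot be used in general.

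Instead, map $G$ onto the abelian colimit $L=(\bigoplus_F F)/\langle c-c'\rangle$. Pushouts of monomorphisms of abelian groups are monomorphisms and the graph is a tree, so each factor embeds in $L$. The kernel is then free by \cref{cor:free-subgroup-tree-product}.
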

\begin{proof}
We argue by induction on $h$. By \cref{lem:freeproducts}, we may assume that every amalgamated subgroup has order $2$.

\smallskip
\noindent\emph{The case $h=0$.} Let
\[
V=\bigoplus_F F,
\]
where $F$ ranges over the factors of the amalgamated free product $G$, and let $R$ be the subgroup of $V$ generated by all elements $c-c'$ for which $\langle c\rangle\leq F$ and $\langle c'\rangle\leq F'$ are the two copies of the same amalgamated subgroup.  Put $L=V/R$, and note that every factor~$F$ embeds in $L$.  The natural maps of the factors into $L$ agree on the prescribed amalgamated subgroups and therefore induce an epimorphism $\pi:G\to L$.  Let $D=\ker\pi$.  Since $\pi$ is injective on every factor, $D\cap F=\{1\}$ for every factor $F$.  As $D\trianglelefteq G$, we also have $D\cap gFg^{-1}=\{1\}$ for every $g\in G$.  Hence $D$ is free by \cref{cor:free-subgroup-tree-product}.  Thus $D\in\SJ$ and $L\in\SJ$, and so $G\in\SJ$.

\smallskip
\noindent\emph{The induction step.} Assume that $h>0$ and that the theorem holds for admissible amalgams of height at most $h-1$.

For every factor $F\in\Hclass_h$, choose a complete linearly ordered system $\mathcal S_F$ of normal subgroups of $F$ whose factors belong to $\Hclass_{h-1}$.  Let $\Lambda_F$ be the set of jumps of $\mathcal S_F$, with the order induced by $\mathcal S_F$.  Thus, if $(A,B)$ and $(A',B')$ are distinct jumps, we put
\[
(A,B)<(A',B')
\quad\text{if and only if}\quad
B\leq A'.
\]

If $C=\langle c\rangle$ is a non-trivial amalgamated subgroup contained in $F$, completeness of~$\mathcal S_F$ determines a unique jump $(A,B)$ such that $c\in B\setminus A$. We denote this jump by $\lambda_F(C)\in\Lambda_F$.

If $E$ is an elementary abelian factor, let $Y_E$ be the set of non-identity elements of its distinct non-trivial amalgamated subgroups.  By hypothesis, $Y_E$ is linearly independent over $\mathbb F_2$, and hence it is contained in a basis $B_E$ of $E$. Choose an arbitrary linear order on $B_E$ and put $\Lambda_E=B_E$ with this order.  If $C=\langle c\rangle\leq E$ is a non-trivial amalgamated subgroup, then $c\in Y_E\subseteq B_E$, and we put $\lambda_E(C)=c$.

We now combine these linearly ordered sets into a single linearly ordered set.  Let
\[
\Omega=\bigsqcup_F\Lambda_F
\]
be the disjoint union of the sets $\Lambda_F$, where $F$ ranges over the factor groups of $G$. Define a symmetric relation $\sigma$ on $\Omega$ as follows.  For $x,y\in\Omega$, write $x\,\sigma\,y$ if there is a non-trivial amalgamated subgroup $C$ joining two factors $F$ and $F'$ such that
\[
\{x,y\}
=
\{\lambda_F(C),\lambda_{F'}(C)\}.
\] Let $\sim$ be the equivalence relation on $\Omega$ generated by
$\sigma$, and put
\(
\Omega_0=\Omega/{\sim}.
\)

For each factor $F$, the natural map
\[
\Lambda_F\longrightarrow\Omega_0,
\qquad
x\longmapsto[x],
\]
is injective. We may therefore regard each $\Lambda_F$ as a subset of $\Omega_0$. Let $\preccurlyeq$ be the relation on $\Omega_0$ generated by the orders on the sets $\Lambda_F$; that is, $\preccurlyeq$ is the transitive closure of all relations
\[
x\preccurlyeq y
\qquad
(x,y\in\Lambda_F,\ x\leq_F y).
\]
This is a partial order.  For if antisymmetry failed, a shortest chain of inequalities witnessing both $x\preccurlyeq y$ and $y\preccurlyeq x$, with $x\neq y$, would pass through a non-trivial closed sequence of factors.  Removing immediate backtracking would produce a cycle among the factors, which is impossible. By the order-extension principle, choose a linear order on $\Omega_0$ extending $\preccurlyeq$, and denote the resulting linearly ordered set by $\Lambda$.  Thus the natural map $\iota_f:\Lambda_F\longrightarrow\Lambda$ is order preserving for every factor $F$, and whenever a non-trivial amalgamated subgroup $C$ joins two factors $F$ and $F'$, we have $\lambda_F(C)=\lambda_{F'}(C)$ as elements of $\Lambda$.

Let $I$ be an initial section of $\Lambda$.  We associate with every factor $F$ a normal subgroup $N_F(I)$. If $F\in\Hclass_h$, let $I_F=\iota_F^{-1}(I)\subseteq\Lambda_F$ and define
\[
N_F(I)=
\bigcup\{B:(A,B)\in I_F\},
\]
with the empty union interpreted as $\{1\}$.  Since $\mathcal S_F$ is complete, $N_F(I)$ is a term of $\mathcal S_F$. If $F=E$ is elementary abelian, define \( N_E(I)= \left\langle b\in B_E:\iota_E(b)\in I \right\rangle. \)

The construction of $\Lambda$ gives, for every non-trivial amalgamated subgroup $C$ contained in factors $F$ and $F'$,
\[
C\leq N_F(I)
\quad\Longleftrightarrow\quad
C\leq N_{F'}(I).
\tag{1}\label{eq:edge-compatible}
\]
Consequently, the quotient maps $F\to F/N_F(I)$ preserve all the prescribed subgroup identifications.  They therefore induce an epimorphism
\[
q_I:G\longrightarrow G_I,
\]
where $G_I$ is the iterated amalgam of the groups $F/N_F(I)$ with the images of the original amalgamated subgroups.  Put
\[
K_I:=\ker q_I=
\left\langle N_F(I):F\text{ a factor of }G\right\rangle^G.
\tag{2}\label{eq:KI-normal-closure}
\]

If $I\subseteq J$, then $N_F(I)\leq N_F(J)$ for every factor $F$, and hence $K_I\leq K_J$.  Moreover, $K_\varnothing=\{1\}$ and  $K_\Lambda=G$.

We next verify that the chain $\{K_I\}$ is complete.  Let $\{I_\nu\}$ be a non-empty chain of initial sections of $\Lambda$. For every factor $F$ we have
\[
N_F\left(\bigcup_\nu I_\nu\right)
 =\bigcup_\nu N_F(I_\nu),
\qquad
N_F\left(\bigcap_\nu I_\nu\right)
 =\bigcap_\nu N_F(I_\nu).
\tag{3}\label{eq:NF-continuity}
\]

Let $I=\bigcup_\nu I_\nu$.  By \eqref{eq:KI-normal-closure}, every element of $K_I$ is a product of finitely many conjugates of elements belonging to groups $N_F(I)$. By \eqref{eq:NF-continuity}, all these elements belong to $N_F(I_\nu)$ for one common $\nu$, since the $I_\nu$ form a chain. Hence
\[
K_{\bigcup_\nu I_\nu}
=
\bigcup_\nu K_{I_\nu}.
\tag{4}\label{eq:KI-union}
\]

Now let $I=\bigcap_\nu I_\nu$. Since $I\subseteq I_\nu$ for every $\nu$, we have $K_I\leq K_{I_\nu}$, and hence $K_I\leq\bigcap_\nu K_{I_\nu}$. For the reverse inclusion, suppose that $g\notin K_I$.  Then $q_I(g)\neq1$ in $G_I$, and choose a reduced form for $q_I(g)$.

We first record the following observation.  If $F$ is a factor and $C\leq F$ is an amalgamated subgroup, then
\[
\bigcap_\nu C N_F(I_\nu)=C N_F(I).
\]
Indeed, the inclusion from right to left is clear.  Conversely, suppose that $a\notin C N_F(I)$.  Then $c^{-1}a\notin N_F(I)=\bigcap_\nu N_F(I_\nu)$ for every $c\in C$. Since $C$ is finite, for each $c\in C$ we may choose $\nu(c)$ such that $c^{-1}a\notin N_F(I_{\nu(c)})$.  The groups $N_F(I_\nu)$ form a chain, so there is one index $\nu$ such that $N_F(I_\nu)\leq N_F(I_{\nu(c)})$ for every $c\in C$.  Hence $c^{-1}a\notin N_F(I_\nu)$ for every $c\in C$, and therefore $a\notin C N_F(I_\nu)$.

The reducedness of the chosen form of $q_I(g)$ is determined by finitely many conditions saying that certain elements do not belong to $N_F(I)$ or to $C N_F(I)$, for suitable factors $F$ and amalgamated subgroups $C$.  By the preceding observation, each of these conditions remains valid with $I$ replaced by some $I_\nu$. Since there are only finitely many conditions and the $I_\nu$ form a chain, one index $\nu$ may be chosen for which they all remain valid. The chosen word is therefore still reduced and non-trivial in $G_{I_\nu}$. Thus $g\notin K_{I_\nu}$, and hence
\[
K_{\bigcap_\nu I_\nu}
=
\bigcap_\nu K_{I_\nu}.
\tag{5}\label{eq:KI-intersection}
\]

It follows from \eqref{eq:KI-union} and \eqref{eq:KI-intersection} that the groups $K_I$, with $I$ ranging over the initial sections of $\Lambda$, form a complete linearly ordered system of normal subgroups of $G$.

Fix $\lambda\in\Lambda$, and put
\[
I_\lambda^-=\{\mu\in\Lambda:\mu<\lambda\},
\qquad
I_\lambda^+=\{\mu\in\Lambda:\mu\leq\lambda\}.
\]
Set $R_\lambda= K_{I_\lambda^+}/K_{I_\lambda^-}$. Then $R_\lambda$ is a normal subgroup of $G_{I_\lambda^-}=G/K_{I_\lambda^-}$.

Let $F$ be an original factor of $G$, and write $\overline F=F/N_F(I_\lambda^-)$ for its image in $G_{I_\lambda^-}$.  The natural map $G_{I_\lambda^-}\to G_{I_\lambda^+}$ restricts on $\overline F$ to the quotient map
\[
F/N_F(I_\lambda^-)
\longrightarrow
F/N_F(I_\lambda^+).
\]
Hence
\[
R_\lambda\cap\overline F
=
N_F(I_\lambda^+)/N_F(I_\lambda^-).
\tag{6}\label{eq:R-factor-intersection}
\]
If $\lambda$ does not belong to the image of $\Lambda_F$ in $\Lambda$, then the two initial sections $I_\lambda^-$ and $I_\lambda^+$ induce the same initial section of $\Lambda_F$.  Hence $N_F(I_\lambda^-)=N_F(I_\lambda^+)$, and the group above is trivial. Suppose now that $\lambda$ belongs to the image of $\Lambda_F$. Since $\Lambda_F\to\Lambda$ is injective, there is a unique $\mu\in\Lambda_F$ whose image is $\lambda$.  If $F\in\Hclass_h$, then $\mu$ is a jump $(A,B)$ of $\mathcal S_F$, and
\[
N_F(I_\lambda^-)=A,
\qquad
N_F(I_\lambda^+)=B.
\]
Thus $R_\lambda\cap\overline F\simeq B/A$, which belongs to $\Hclass_{h-1}$ by the choice of $\mathcal S_F$. If $F$ is elementary abelian, then $\mu$ is a basis element $b\in B_F$.  In this case
\[
N_F(I_\lambda^+)
=
\langle N_F(I_\lambda^-),b\rangle,
\]
and, since $b\notin N_F(I_\lambda^-)$, the quotient $N_F(I_\lambda^+)/N_F(I_\lambda^-)$ has order $2$. Hence every non-trivial group $R_\lambda\cap\overline F$ either belongs to $\Hclass_{h-1}$ or is cyclic of order $2$.

Let
\[
R_\lambda^0=
\left\langle
R_\lambda\cap g\overline Fg^{-1}:
\overline F\text{ a factor of }G_{I_\lambda^-},\
g\in G_{I_\lambda^-}
\right\rangle^{R_\lambda}.
\]
Since $R_\lambda\trianglelefteq G_{I_\lambda^-}$, $ R_\lambda\cap g\overline Fg^{-1} = g(R_\lambda\cap\overline F)g^{-1}. $ Thus all the groups used in the definition of $R_\lambda^0$ are conjugates of the groups described in~\eqref{eq:R-factor-intersection}. We now apply \cref{lem:subgroup-amalgam} to the subgroup $R_\lambda\leq G_{I_\lambda^-}$. Recall that the factor groups of~$G_{I_\lambda^-}$ are the groups $\overline F=F/N_F(I_\lambda^-)$, where $F$ ranges over the original factor groups of $G$; moreover, $R_\lambda$ is normal in $G_{I_\lambda^-}$. The lemma therefore gives that $R_\lambda/R_\lambda^0$ is free.

We next determine the factor groups occurring in the decomposition of $R_\lambda^0$.  Since $R_\lambda\trianglelefteq G_{I_\lambda^-}$, for every factor $\overline F$ and every $g\in G_{I_\lambda^-}$ we have
\[
R_\lambda\cap g\overline Fg^{-1}
=
g(R_\lambda\cap\overline F)g^{-1}.
\]
We have already shown that, whenever the group in \eqref{eq:R-factor-intersection} is non-trivial, it is either a group in $\Hclass_{h-1}$ or a cyclic group of order $2$. Consequently, the factors in the decomposition of $R_\lambda^0$ supplied by \cref{lem:subgroup-amalgam} are conjugates of groups of these two kinds.

We also have to consider the amalgamated subgroups in this decomposition.  Let $\overline C$ be an amalgamated subgroup of $G_{I_\lambda^-}$.  By the construction of $G_{I_\lambda^-}$, $\overline C$ is either trivial or has order $2$.  Hence every intersection $R_\lambda\cap g\overline Cg^{-1}$ is either trivial or has order $2$.  By~\cref{lem:subgroup-amalgam}, the amalgamated subgroups occurring in $R_\lambda^0$ are contained in groups of this form.  Therefore every amalgamated subgroup of $R_\lambda^0$ is trivial or has order $2$.

We now designate the factors of $R_\lambda^0$ as follows. Those which are conjugate to factors $N_F(I_\lambda^+)/N_F(I_\lambda^-)$ arising from $F\in\Hclass_h$ are designated as factors of type~\textnormal{(i)}. They belong to $\Hclass_{h-1}$.  Those arising from elementary abelian factors of $G$ are designated as factors of type~\textnormal{(ii)}.  Each of these has order $2$ and is therefore an elementary abelian $2$-group.  Since a group of order $2$ has only one non-trivial subgroup, the linear independence condition required for factors of type~\textnormal{(ii)} is automatic. It follows that $R_\lambda^0$ is an admissible amalgam of height at most $h-1$.  By the induction hypothesis, $R_\lambda^0\in\SJ$. The quotient $R_\lambda/R_\lambda^0$ is free and hence belongs to $\SJ$. Consequently, $R_\lambda\in\SJ$.

It remains to refine the complete normal system $\{K_I\}$ in order to obtain a general abelian series for $G$.

For each $\lambda\in\Lambda$, choose a complete general abelian series $\mathcal T_\lambda$ of $ R_\lambda = K_{I_\lambda^+}/K_{I_\lambda^-} $ which witnesses that $R_\lambda\in\SJ$. The inverse images in $K_{I_\lambda^+}$ of the terms of $\mathcal T_\lambda$ form a complete linearly ordered system between $K_{I_\lambda^-}$ and $K_{I_\lambda^+}$.  Thus $\mathcal T_\lambda$ gives a refinement of the interval $K_{I_\lambda^-}\leq K_{I_\lambda^+}$. Moreover, the factors at the jumps of this lifted series are the same as the corresponding factors of $\mathcal T_\lambda$, and are therefore abelian. Clearly very term introduced by these refinements has finite subnormal defect in $G$.

We now insert, for every $\lambda\in\Lambda$, the lifted series $\mathcal T_\lambda$ into the interval $[K_{I_\lambda^-},K_{I_\lambda^+}]$ of the chain $\{K_I\}$. These refinements are compatible with the order on $\Lambda$: if $\lambda<\mu$, then $I_\lambda^+\subseteq I_\mu^-$, and consequently $K_{I_\lambda^+}\leq K_{I_\mu^-}$. Thus all the lifted series, together with the groups~$K_I$, form a single linearly ordered system.

This refined system is complete.  Indeed, consider any non-empty family of its terms.  If a supremum or an infimum lies inside one of the intervals $[K_{I_\lambda^-},K_{I_\lambda^+}]$, it exists because the lifted series $\mathcal T_\lambda$ is complete.  Otherwise the family meets arbitrarily many of the intervals, and its supremum or infimum is a boundary term $K_I$.  The existence of the required boundary term follows from \eqref{eq:KI-union} and \eqref{eq:KI-intersection}, which state that the system $\{K_I\}$ is closed under unions and intersections of chains.

Finally, every jump of the refined system occurs inside one of the lifted series~$\mathcal T_\lambda$.  Indeed, the only possible jumps of the original chain of initial sections are $I_\lambda^-<I_\lambda^+$, and each corresponding interval $K_{I_\lambda^-}\leq K_{I_\lambda^+}$ has now been refined by $\mathcal T_\lambda$.  Since every jump of each~$\mathcal T_\lambda$ has abelian factor, every jump of the resulting complete system has abelian factor.

We have therefore obtained a complete general abelian series of $G$ all of whose terms have finite subnormal defect in $G$.  Hence $G\in\SJ$.
\end{proof}

\section{The counterexample}

Let $\omega_1$ be the first uncountable ordinal.  For every $\beta<\omega_1$ choose an injection \( e_\beta:\beta\longrightarrow\N, \) and, for $\alpha<\beta$, put \( n(\alpha,\beta)=e_\beta(\alpha)+1. \) Let
\[
 E=\bigoplus_{\alpha<\omega_1}\langle g_\alpha\rangle,
 \qquad |g_\alpha|=2.
\]
For each $\alpha<\beta$ take a copy $A_{\alpha\beta}$ of the group $A_{n(\alpha,\beta)}$ supplied by \cref{prop:pairhulls}, with marked commuting involutions $x_{\alpha\beta},y_{\alpha\beta}$. Define $G$ by the presentation obtained from
\[
 E * \mathop{*}_{\alpha<\beta<\omega_1} A_{\alpha\beta}
\]
by imposing the identifications $x_{\alpha\beta}\mapsto g_\alpha$ and $y_{\alpha\beta}\mapsto g_\beta$. For each pair $\alpha<\beta$, these two relations identify the Klein four group $\langle x_{\alpha\beta},y_{\alpha\beta}\rangle$ with the Klein four subgroup $\langle g_\alpha,g_\beta\rangle$ of $E$.  After well-ordering the set of pairs $\alpha<\beta<\omega_1$, we may therefore construct $G$ by adjoining the factors $A_{\alpha\beta}$ one at a time to the initial factor $E$ over these Klein four subgroups, and by taking unions at limit stages.  Thus $G$ is an iterated amalgam in the sense of \cref{sec:amalgams}.  The normal-form theorem at the successor stages, together with the unions at limit stages, shows that $E$ and every $A_{\alpha\beta}$ embed in $G$.

\begin{proposition}\label{prop:notSJ}
The group $G$ does not belong to $\SJ$.
\end{proposition}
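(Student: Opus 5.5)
Suppose, for a contradiction, that $G\in\SJ$, and fix a general abelian series $\mathcal S$ of $G$ all of whose terms have finite defect in $G$. Since $E$ embeds in $G$, each $g_\alpha$ is a non-trivial element of $G$, so it is covered by a unique jump $(U_\alpha,V_\alpha)$ of $\mathcal S$. Put $d_\alpha=\defect_G(V_\alpha)<\omega$. The map $\alpha\mapsto d_\alpha$ sends $\omega_1$ into $\N$, so by the pigeonhole principle there is an uncountable set $\Sigma\subseteq\omega_1$ and an integer $d$ with $d_\alpha=d$ for every $\alpha\in\Sigma$.

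Next I pick the pair. Let $\beta\in\Sigma$ be such that $\Sigma\cap\beta$ is infinite; for instance, take $\beta$ to be any element of $\Sigma$ above the $\omega$-th element of $\Sigma$ in its increasing enumeration. Since $e_\beta$ is injective on the infinite set $\Sigma\cap\beta$, it cannot take only the finitely many values $0,\dots,d-1$ there. So there is $\alpha\in\Sigma\cap\beta$ with $e_\beta(\alpha)\geq d$, that is, $n(\alpha,\beta)=e_\beta(\alpha)+1>d$. Note that $d\geq1$, because $V_\alpha\neq G$ is impossible only when the jump is the top one; in any case $n(\alpha,\beta)\geq\max(d,1)$.

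Now I transfer perfectness. Set $n=n(\alpha,\beta)$ and $K=\langle g_\alpha,g_\beta\rangle$. In $G$ the factor $A_{\alpha\beta}$ embeds and its marked involutions are identified with $g_\alpha,g_\beta$, so $K=\langle x_{\alpha\beta},y_{\alpha\beta}\rangle\leq A_{\alpha\beta}\leq G$. By \cref{prop:pairhulls}, $K^{A_{\alpha\beta},j}$ is perfect for $1\leq j\leq n$. Then \cref{lem:ambient} gives that $K^{G,j}$ is perfect for each such $j$, and in particular for $j=\max(d,1)\leq n$.

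Finally I apply \cref{lem:pair-obstruction} with $x=g_\alpha$, $y=g_\beta$ and this value of $j$. Both $V_\alpha$ and $V_\beta$ have defect $d\leq j$ in $G$, which the lemma forbids. This contradiction proves the proposition. If $d=0$, the lemma is still used with $j=1$, since defect $0\leq1$.

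The only step needing care is the combinatorial choice of $\alpha<\beta$ within a fixed defect class. Everything else is a direct citation, together with the embedding of the factors in $G$, which was already recorded after the construction.
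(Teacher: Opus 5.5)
Your argument is correct and is essentially the paper's proof. It pigeonholes the finite defects of the jumps covering the $g_\alpha$, uses injectivity of $e_\beta$ to find $\alpha<\beta$ in one defect class with $n(\alpha,\beta)>d$, and then concludes with \cref{prop:pairhulls}, \cref{lem:ambient} and \cref{lem:pair-obstruction}; the differences (recording only $\defect_G(V_\alpha)$, which is all \cref{lem:pair-obstruction} needs, and applying it with $j=\max(d,1)$ rather than $n(\alpha,\beta)$) are cosmetic. Your aside asserting $d\geq1$ is not justified, since $d=0$ when $V_\alpha=G$, but it is harmless because you handle $d=0$ separately with $j=1$.
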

\begin{proof}
Suppose that $G$ has an $\SJ$-series $\mathcal S$.  For each $\alpha<\omega_1$, let $A_\alpha<B_\alpha$ be the jump of~$\mathcal S$ containing $g_\alpha$, and put $d_\alpha=\max\{\defect_G(A_\alpha),\defect_G(B_\alpha)\}$. Since $\omega_1$ is uncountable and $\N$ is countable, there are an integer $d$ and an uncountable set $S\subseteq\omega_1$ such that $d_\alpha=d$ for $\alpha\in S$. Choose $\beta\in S$ for which $S\cap\beta$ contains at least $d+1$ elements.  Since $e_\beta$ is injective, some $\alpha\in S\cap\beta$ satisfies $e_\beta(\alpha)\geq d$. Hence $n(\alpha,\beta)=e_\beta(\alpha)+1>d$.

Inside the factor $A_{\alpha\beta}$, \cref{prop:pairhulls} gives that $\langle g_\alpha,g_\beta\rangle^{A_{\alpha\beta},n(\alpha,\beta)}$ is perfect.  By \cref{lem:ambient}, the same iterated normal closure taken in $G$ is perfect. \cref{lem:pair-obstruction} therefore yields $\max\{d_\alpha,d_\beta\}>n(\alpha,\beta)$, whereas $d_\alpha=d_\beta=d<n(\alpha,\beta)$.  This contradiction proves $G\notin\SJ$.
\end{proof}

In order to complete the proof of our main theorem, it remains to show that every countable subgroup of $G$ belongs to $\SJ$.  For a countable set $S\subseteq\omega_1$, put $E_S=\bigoplus_{\alpha\in S}\langle g_\alpha\rangle$ and let $G_S= \left\langle E_S,\ A_{\alpha\beta}: \alpha,\beta\in S,\ \alpha<\beta \right\rangle \leq G$.  .

\begin{lemma}\label{lem:support}
Every countable subgroup of $G$ is contained in $G_S$ for some countable set $S\subseteq\omega_1$.
\end{lemma}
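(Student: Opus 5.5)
The plan is a support argument. $G$ is generated by $E$ together with the factors $A_{\alpha\beta}$ ($\alpha<\beta<\omega_1$), and $E$ is generated by the $g_\alpha$. So every element of $G$ can be written as a finite word in elements of $E$ and of finitely many factors $A_{\alpha\beta}$.

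For each $z\in G$, fix once and for all such a word. Let $\sigma(z)\subseteq\omega_1$ be the finite set of indices appearing in it. These are the indices $\alpha$ with $g_\alpha$ occurring among the letters from $E$ (each such letter is a finite sum of the $g_\gamma$). They are also the two indices $\alpha,\beta$ of each factor $A_{\alpha\beta}$ from which a letter is taken. By construction, $z$ lies in $\langle E_{\sigma(z)},A_{\alpha\beta}:\alpha,\beta\in\sigma(z),\ \alpha<\beta\rangle=G_{\sigma(z)}$.

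Now let $H\leq G$ be countable, and put $S=\bigcup_{z\in H}\sigma(z)$. This is a countable union of finite sets, hence countable. For $T\subseteq T'$ we clearly have $G_T\leq G_{T'}$, since the generating set of $G_T$ is contained in that of $G_{T'}$. Therefore each $z\in H$ lies in $G_{\sigma(z)}\leq G_S$, so $H\leq G_S$.

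There is essentially no obstacle here. The only point needing care is that $G$ really is generated by $E$ and the $A_{\alpha\beta}$. This is immediate from the defining presentation: $G$ is a quotient of the free product $E*\mathop{*}A_{\alpha\beta}$. Equivalently, it follows from the description of $G$ as the union of the iterated amalgam stages, each generated by the factors adjoined so far. No normal-form uniqueness is required, since we only use the existence of some expression for each element.
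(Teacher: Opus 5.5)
Your proposal is correct and follows essentially the same route as the paper: attach to each element the finite set of indices occurring in some expression of it in terms of $E$ and finitely many factors $A_{\alpha\beta}$, then take the union of these finite sets over a countable subgroup. Your observation that only the existence of such an expression is needed, not the paper's ``reduced'' form, is accurate.
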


\begin{proof}
Every element of $G$ has a reduced expression involving only finitely many groups~$A_{\alpha\beta}$ and an element of $E$ with finite support.  Hence every element of $G$ belongs to $G_S$ for some finite set $S\subseteq\omega_1$.  If $C\leq G$ is countable, take the union of such finite sets over all elements of $C$.  The resulting set $S$ is countable and $C\leq G_S$.
\end{proof}

We shall prove that $G_S\in\SJ$ for every countable $S$.  Choose an initial segment $J$ of $\mathbb N$ and an enumeration $S=\{s_i:i\in J\}$. Thus either $J=\mathbb N$ or $J=\{0,\ldots,n-1\}$ for some $n$. For $i<j$ in $J$, write $g_i=g_{s_i}$ and $A_{ij}=A_{s_is_j}$.  Let $x_i^{ij}$ and $x_j^{ij}$ be the two distinguished involutions of $A_{ij}$ which are identified with $g_i$ and $g_j$, respectively, and put $X_i^{ij}=\langle x_i^{ij}\rangle^{A_{ij}}$ and $N_{ij}=\langle x_i^{ij},x_j^{ij}\rangle^{A_{ij}}$.

\begin{lemma}\label{lem:marker-intersection}
For every $i<j$ in $J$, $X_i^{ij}\cap\langle x_i^{ij},x_j^{ij}\rangle = \langle x_i^{ij}\rangle$.
\end{lemma}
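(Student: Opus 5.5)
Inside $A_{ij}\cong A_N$ the distinguished involutions are $x=(w,a)$ and $y=(u,b)$, where $x_i^{ij}=x$, $x_j^{ij}=y$, $u=u_1+u_a$ and $w=v_1+v_b$. The Klein four group $\langle x,y\rangle$ has the four elements $1$, $x$, $y$ and $xy$. Since $\langle x\rangle\le X=X_i^{ij}$ trivially, the plan is to show that $y\notin X$ and $xy\notin X$. Note that $xy\notin X$ is equivalent to $y\notin X$, because $x\in X$. So it suffices to prove that $y\notin X$.

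For this I will use the description of $X$ from \cref{lem:cheapmarker}:
\[
X=(C\oplus W_b)\rtimes Q_1,\qquad C=[U,Q_1]\le U,\quad W_b\le V .
\]
Project $A_N=(U\oplus V)\rtimes(P\times R)$ onto the quotient $P\times R$. The image of $X$ is $Q_1\times 1$, so every element of $X$ has trivial $R$-coordinate. The element $y=(u,b)$ has $R$-coordinate $b$, which is nontrivial because $b$ is an involution. Hence $y\notin X$.

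Equivalently, $X$ lies in the kernel of the projection $A_N\to R$, which is a normal subgroup, so $\langle x\rangle^{A_N}$ lies there too; $y$ does not. This makes the statement nearly immediate, and there is no real obstacle. The only point to check is that the identification of $x$ and $y$ with the pair $(x_i^{ij},x_j^{ij})$ is oriented as in the construction. The argument is symmetric in any case: the projection onto $P$ kills $y^{A_N}$ but not $x$. Therefore $X\cap\langle x,y\rangle=\{1,x\}=\langle x_i^{ij}\rangle$.
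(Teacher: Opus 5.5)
Your proof is correct, and it takes a different and more elementary route than the paper. The paper argues by contradiction through perfectness. If $x_j^{ij}\in X_i^{ij}$, then $X_i^{ij}$ and $N_{ij}$ contain each other, so $X_i^{ij}=N_{ij}$. This is impossible, because $N_{ij}$ is perfect by \cref{prop:pairhulls}, while $X_i^{ij}/(X_i^{ij})'\simeq W_b\neq 0$ by \cref{lem:cheapmarker}.

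You instead use the homomorphism $A_N\to P\times R\to R$. Its kernel $(U\oplus V)\rtimes P$ is normal and contains $x=(w,a)$, hence all of $x^{A_N}$. It does not contain $y=(u,b)$, since $b\neq 1$. Your argument needs only the splitting $A_N=(U\oplus V)\rtimes(P\times R)$ and the non-triviality of $b$. It uses neither the perfectness of any normal closure nor the computation of $X'$. So your initial appeal to \cref{lem:cheapmarker} is unnecessary: the ``equivalently'' version is self-contained.

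The paper's argument is instead invariant-based. It applies to any factor in which the pair has a perfect normal closure and each single marker has a non-perfect one, without relying on the direct-product structure of the top group. Within the paper it costs nothing extra, since \cref{lem:cheapmarker} is needed later anyway, in the proof that $K_m\in\SJ$.

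Your remark on orientation is well taken. The enumeration of $S$ need not respect the order of $\omega_1$, so $x_i^{ij}$ may be the second marked involution of $A_{s_js_i}$. You cover this case by projecting onto $P$ instead. The paper covers it through the symmetric statement for $Y=y^{A_N}$ in \cref{lem:cheapmarker}.
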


\begin{proof}
Suppose that $x_j^{ij}\in X_i^{ij}$.  Since $X_i^{ij}$ is normal in $A_{ij}$ and contains both $x_i^{ij}$ and $x_j^{ij}$, it contains their normal closure $N_{ij}$.  Conversely, $N_{ij}$ is normal in $A_{ij}$ and contains $x_i^{ij}$, and therefore $X_i^{ij}\leq N_{ij}$.  Thus $X_i^{ij}=N_{ij}$.  This is impossible, since $N_{ij}$ is perfect by \cref{prop:pairhulls}, whereas $X_i^{ij}$ is not perfect by \cref{lem:cheapmarker}.  Hence $x_j^{ij}\notin X_i^{ij}$.

Since $\langle x_i^{ij},x_j^{ij}\rangle$ is elementary abelian of order $4$, the result follows.
\end{proof}

For $m\geq0$, define
\[
R_m=
\left\langle g_i:i\in J,\ i<m\right\rangle^{G_S},
\qquad
G_m=G_S/R_m.
\]
Thus $R_0=\{1\}$ and $G_0=G_S$.  If $J$ is finite, the groups $R_m$ and $G_m$ become constant once $m$ is larger than every element of $J$.

Fix $m\geq0$.  Put
\[
E_m=
\bigoplus_{\substack{k\in J\\ k\geq m}}
\langle g_k\rangle,
\qquad
A_{ij}^{(m)}=
\begin{cases}
A_{ij}, & m\leq i,\\
A_{ij}/X_i^{ij}, & i<m\leq j,\\
A_{ij}/N_{ij}, & j<m,
\end{cases}
\tag{1}\label{eq:Aijm}
\]
for $i<j$ in $J$.  We first show that these groups, with the surviving prescribed identifications, form an iterated amalgam.

If $m\leq i<j$, neither $g_i$ nor $g_j$ has been killed, and we keep the factor $A_{ij}$ unchanged.  If $i<m\leq j$, the involution $x_i^{ij}$ is to be killed, so we replace $A_{ij}$ by $A_{ij}/X_i^{ij}$.  By \cref{lem:marker-intersection}, $x_j^{ij}\notin X_i^{ij}$, and therefore the image of $x_j^{ij}$ in $A_{ij}/X_i^{ij}$ is still a non-trivial involution.  Hence it may still be identified with the surviving element $g_j\in E_m$.  Finally, if $j<m$, both distinguished involutions are killed, so we replace $A_{ij}$ by $A_{ij}/N_{ij}$; in this case neither of the two original identifications survives.

There is a natural epimorphism $G_S\to\widetilde G_m$.  On $E_S$ it sends $g_i$ to $1$ for $i<m$ and fixes $g_i$ for $i\geq m$.  On a factor $A_{ij}$ it is the identity if $m\leq i$, the quotient map onto $A_{ij}/X_i^{ij}$ if $i<m\leq j$, and the quotient map onto $A_{ij}/N_{ij}$ if $j<m$.  The preceding paragraph shows that these maps respect all the prescribed identifications.  Since every $g_i$ with $i<m$ is killed, this epimorphism kills $R_m$ and therefore induces an epimorphism
\[
G_m=G_S/R_m\longrightarrow\widetilde G_m.
\]

Conversely, let $q_m:G_S\to G_m$ be the quotient map.  If $i<m$, then $q_m(g_i)=1$.  Hence, whenever $i<m\leq j$, the restriction of $q_m$ to $A_{ij}$ kills $x_i^{ij}$ and therefore kills its normal closure $X_i^{ij}$.  It consequently induces a homomorphism $A_{ij}/X_i^{ij}\to G_m$.  If $j<m$, it kills both $x_i^{ij}$ and $x_j^{ij}$, and therefore induces a homomorphism $A_{ij}/N_{ij}\to G_m$.  The surviving elements $g_k$, $k\geq m$, also give a homomorphism $E_m\to G_m$.  These maps satisfy the same prescribed identifications, and hence together define a homomorphism $\widetilde G_m\longrightarrow G_m$.

The two homomorphisms just constructed are inverse to one another, since their compositions are the identity on all the factors which generate the two amalgams.  Thus $G_m\simeq\widetilde G_m$.

By the normal-form theorem at each successor stage, and by taking unions at limit stages, all the factors of $\widetilde G_m$ embed in $\widetilde G_m$. Consequently the image of $E_S$ in $G_m$ is naturally identified with $E_m$, and, for every $i<j$ in $J$, the image of $A_{ij}$ in $G_m$ is naturally isomorphic to the group $A_{ij}^{(m)}$ in~\eqref{eq:Aijm}.

For every $m\geq0$, since $R_m\leq R_{m+1}$ there is a natural epimorphism $G_m\to G_{m+1}$.  Put
\[
K_m=\ker(G_m\longrightarrow G_{m+1})
    =R_{m+1}/R_m.
\]
If $m\notin J$, then, since $J$ is an initial segment of $\mathbb N$, there is no index of $J$ greater than or equal to $m$.  Hence $R_m=R_{m+1}$ and $K_m=\{1\}$.  In studying the non-trivial groups $K_m$ we may therefore assume that $m\in J$.

\smallskip

We now determine the intersections of $K_m$ with the factor groups of $G_m$.  On $E_m$, the map $G_m\to G_{m+1}$ has kernel $\langle g_m\rangle$, and hence $K_m\cap E_m=\langle g_m\rangle$. Consider now a factor corresponding to a pair $i<j$.  If $i=m$, then $j>m$, and the factor $A_{mj}$ in $G_m$ is mapped onto $A_{mj}/X_m^{mj}$.  Hence $K_m\cap A_{mj}=X_m^{mj}$. If $j=m$, then $i<m$, and the factor $A_{im}/X_i^{im}$ in $G_m$ is mapped onto $A_{im}/N_{im}$.  Hence
\[
K_m\cap(A_{im}/X_i^{im})
=
N_{im}/X_i^{im}
=:W_{im}.
\] All remaining factors are unchanged in passing from $G_m$ to
$G_{m+1}$.  Indeed, they correspond to pairs satisfying either
$m<i<j$, or $i<m<j$, or $i<j<m$.  Therefore $K_m$ has trivial
intersection with each of these factors.

Let $J_m$ be the normal subgroup of $K_m$ generated by its
intersections with all conjugates of the factor groups of $G_m$.
Apply \cref{lem:subgroup-amalgam} to $K_m\leq G_m$, so $J_m$ is an iterated amalgam whose factors are of the form $K_m\cap gFg^{-1}$, where $F$ is a factor of $G_m$ and $g\in G_m$. Since $K_m\trianglelefteq G_m$, we have $K_m\cap gFg^{-1}
=
g(K_m\cap F)g^{-1}$. By the preceding computation of the intersections $K_m\cap F$, the
non-trivial factors of $J_m$ are therefore conjugates in $G_m$
of groups of the following three kinds:
\[
\langle g_m\rangle\simeq C_2,\qquad
X_m^{mj}\quad(j>m),\qquad
W_{im}\quad(i<m).
\]
Every non-trivial amalgamated subgroup in this decomposition has order $2$.  Indeed, by \cref{lem:subgroup-amalgam} it is contained in a group of the form $K_m\cap gCg^{-1}$, where $C$ is an amalgamated subgroup of $G_m$.  Since $K_m\trianglelefteq G_m$, this group is $g(K_m\cap C)g^{-1}$.  Every amalgamated subgroup $C$ of $G_m$ is contained in the factor $E_m$, while $K_m\cap E_m=\langle g_m\rangle$.  Hence $K_m\cap C\leq\langle g_m\rangle$, and so every amalgamated subgroup occurring in the decomposition of $J_m$ is trivial or has order $2$.

Consider now a factor of $J_m$ of the form $gX_m^{mj}g^{-1}$, and let $D$ be a non-trivial amalgamated subgroup contained in this factor.  Since $K_m\trianglelefteq G_m$ and $K_m\cap A_{mj}=X_m^{mj}$, we have
\[
K_m\cap gA_{mj}g^{-1}
=
gX_m^{mj}g^{-1}.
\]
Applying \cref{lem:subgroup-amalgam} with $H=K_m$, there are an amalgamated subgroup $C\leq A_{mj}$ of $G_m$ and an element $a\in A_{mj}$ such that $D\leq K_m\cap gaCa^{-1}g^{-1}$. Conjugating by $g^{-1}$ and using $K_m\trianglelefteq G_m$, we obtain $g^{-1}Dg\leq K_m\cap aCa^{-1}$. Since $aCa^{-1}\leq A_{mj}$, it follows that
\[
g^{-1}Dg\leq K_m\cap aCa^{-1}
=
(K_m\cap A_{mj})\cap aCa^{-1}
=
X_m^{mj}\cap aCa^{-1}.
\] In the iterated amalgam decomposition of $G_m$, the only
amalgamated subgroup contained in the factor $A_{mj}$ is $C=\langle x_m^{mj},x_j^{mj}\rangle$,
through which $A_{mj}$ is amalgamated with $E_m$.
Since $X_m^{mj}\trianglelefteq A_{mj}$,
\cref{lem:marker-intersection} gives
\[
\begin{aligned}
X_m^{mj}\cap
a\langle x_m^{mj},x_j^{mj}\rangle a^{-1}
=
a\bigl(
X_m^{mj}\cap
\langle x_m^{mj},x_j^{mj}\rangle
\bigr)a^{-1}=
a\langle x_m^{mj}\rangle a^{-1}.
\end{aligned}
\]
Thus $g^{-1}Dg\leq a\langle x_m^{mj}\rangle a^{-1}$. The group on the right has order $2$, and $D$ is non-trivial. Therefore
\[
g^{-1}Dg
=
a\langle x_m^{mj}\rangle a^{-1}.
\]
Hence, after conjugating $D$ back into $X_m^{mj}$, its non-identity element is an $A_{mj}$-conjugate of~$x_m^{mj}$.

\medskip

\begin{lemma}
$K_m\in\SJ$.
\end{lemma}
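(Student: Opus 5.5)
The plan is to peel $K_m$ apart in layers, each of which is in $\SJ$, and then use the closure of $\SJ$ under extensions.

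The outer layer is the free quotient. By \cref{lem:subgroup-amalgam} applied to $K_m\leq G_m$, the quotient $K_m/J_m$ is free, and free groups lie in $\SJ$. It therefore suffices to prove $J_m\in\SJ$.

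For $J_m$ we use the description obtained just before the statement. $J_m$ is an iterated amalgam whose non-trivial factors are conjugates of $\langle g_m\rangle\simeq C_2$, of the groups $X_m^{mj}$ ($j>m$), and of the groups $W_{im}$ ($i<m$). All of its amalgamated subgroups are trivial or of order $2$. Moreover, a non-trivial amalgamated subgroup lying in a conjugate of $X_m^{mj}$ is, after conjugating back, generated by an $A_{mj}$-conjugate of $x_m^{mj}$.

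We then kill derived subgroups in the $X$-factors. For each factor $g X_m^{mj}g^{-1}$ put $N=g(X_m^{mj})'g^{-1}$, which is normal in the factor, and take $N=\{1\}$ on all other factors. The hypothesis of \cref{lem:factor-quotient} is that $N$ meets every amalgamated subgroup of the factor trivially. This holds because those subgroups are generated by conjugates of $x_m^{mj}$, and $x_m^{mj}\notin(X_m^{mj})'$ by \cref{lem:cheapmarker}. Since $(X_m^{mj})'$ is normal in $A_{mj}$, no $A_{mj}$-conjugate of $x_m^{mj}$ lies in it either.

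\cref{lem:factor-quotient} then gives an epimorphism $J_m\to\overline{J_m}$. Its kernel is a free product of conjugates of the groups $(X_m^{mj})'=C\rtimes Q_1$ together with a free group. Each $(X_m^{mj})'$ is a subgroup of $A_{n(\cdot,\cdot)}$, so it has finite normal height by \cref{thm:finite-height}(i) and lies in $\SJ$ by \cref{lem:heightSJ}. Hence the kernel lies in $\SJ$ by \cref{lem:freeproducts}.

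It remains to show $\overline{J_m}\in\SJ$, and here we apply \cref{thm:amalgam}. The factors of $\overline{J_m}$ are of three kinds:
\begin{enumerate}[label=(\alph*)]
\item groups of order $2$;
\item the elementary abelian $2$-groups $X_m^{mj}/(X_m^{mj})'\simeq W_b$, by \cref{lem:cheapmarker};
\item conjugates of $W_{im}=N_{im}/X_i^{im}$ for $i<m$.
\end{enumerate}
Each $W_{im}$ is a section of $A_{n(s_i,s_m)}$, so it has finite normal height by \cref{thm:finite-height}(i). Since only finitely many indices $i<m$ occur, there is a common bound $h$ for these heights. The factors of kind (b) may have unbounded original height, but they have now been made abelian, which is exactly why we quotient them first.

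For admissibility we also need linear independence in the factors of kind (b). In such a factor, distinct non-trivial amalgamated subgroups are generated by images of distinct conjugates of $x_m^{mj}$. Only the $R$-part of a conjugate survives modulo $X'$, so these images are elements $v_r+v_{br}$. By \cref{lem:independent-involutions}, distinct such elements are linearly independent. Hence $\overline{J_m}$ is an admissible amalgam of height at most $h$, and \cref{thm:amalgam} gives $\overline{J_m}\in\SJ$.

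Extension closure then yields first $J_m\in\SJ$ and then $K_m\in\SJ$.

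The main obstacle is the bookkeeping in identifying the amalgamated subgroups of the factors $X_m^{mj}$ after passing to $\overline{J_m}$. One must check that distinct amalgamated subgroups of $J_m$ inside one such factor really give distinct cosets $\langle b\rangle r$, so that independence applies, rather than collapsing in the quotient. I would handle this by recording the conjugating element $a\in A_{mj}$ from the preceding computation and noting that its $P$-component acts trivially modulo $X'$. It is also worth confirming that the factors of kind (c), where the image of $x_m^{im}$ is the amalgamated element, need no independence condition, since they are treated as $\Hclass_h$-factors and not as elementary abelian ones.
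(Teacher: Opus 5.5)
Your proposal is correct. Up to the construction of $\overline{J_m}$ it is the paper's argument: the free quotient $K_m/J_m$, then killing the conjugates of $(X_m^{mj})'$ via \cref{lem:factor-quotient}, with kernel a free product of groups of finite normal height and a free group.

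The difference is at the last step. The paper does not apply \cref{thm:amalgam} to $\overline{J_m}$ directly. It first sets aside the factors conjugate to the $W_{im}$ and groups the remaining elementary abelian factors into connected clusters $\mathcal C_\lambda$. It then uses \cref{lem:abelian-compression} to map each cluster amalgam $H_\lambda$ onto an elementary abelian group $L_\lambda$ with free kernel. Only the resulting amalgam $\widehat J_m$ is fed to \cref{thm:amalgam}.

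You skip this compression, and that is legitimate. The definition of an admissible amalgam puts no restriction on elementary abelian factors being joined to one another. The only condition on such factors is independence of the distinct amalgamated involutions in each $X_m^{mj}/(X_m^{mj})'$. You verify this from the $v_r+v_{br}$ description and \cref{lem:independent-involutions}, and the paper itself records the same fact just before compressing. Nothing in the proof of \cref{thm:amalgam} uses that elementary abelian factors meet only $W_{im}$-factors, which is all the compression achieves. So your route is shorter and makes \cref{lem:abelian-compression} unnecessary for this lemma.

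Two smaller remarks. First, your proof that $g(X_m^{mj})'g^{-1}$ meets the amalgamated subgroups trivially is cleaner than the paper's appeal to the $v_r+v_{br}$ images. You use that $(X_m^{mj})'$ is normal in $A_{mj}$ and $x_m^{mj}\notin(X_m^{mj})'$.

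Second, the possible ``collapse'' you flag as the main obstacle is harmless. Admissibility only concerns the \emph{distinct} non-trivial amalgamated subgroups of a factor. A tree product may also use the same subgroup of a factor for several edges. So you need not check that distinct amalgamated subgroups of $J_m$ give distinct cosets of $\langle b\rangle$.
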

\begin{proof}
Fix $j\in J$ with $j>m$, and consider a factor of $J_m$ of the form $gX_m^{mj}g^{-1}$. Let $D$ be a non-trivial amalgamated subgroup contained in this factor. By the preceding discussion, after conjugating by $g^{-1}$ there is an element $a\in A_{mj}$ such that $g^{-1}Dg = a\langle x_m^{mj}\rangle a^{-1}$. Thus, if $y=ax_m^{mj}a^{-1}$, then $g^{-1}Dg=\langle y\rangle$.

By \cref{lem:cheapmarker}, $X_m^{mj}/(X_m^{mj})'$ is an elementary abelian $2$-group.  Moreover, the proof of~\cref{lem:cheapmarker} shows that, under the identification with $W_b$ used there, the image of every~\hbox{$A_{mj}$-con}\-jugate of $x_m^{mj}$ is of the form $v_r+v_{br}$ for some $r\in R$.  By \cref{lem:independent-involutions}, these elements are non-zero, and the distinct ones are linearly independent. Hence $y\notin (X_m^{mj})'$. Since $\langle y\rangle$ has order $2$, it follows that $(X_m^{mj})'\cap g^{-1}Dg=\{1\}$. Conjugating by $g$, we obtain
\[
g(X_m^{mj})'g^{-1}\cap D=\{1\}.
\]
The same conclusion is immediate when $D=\{1\}$.

We may therefore apply \cref{lem:factor-quotient} to $J_m$, factoring each factor conjugate to some~$X_m^{mj}$ by the corresponding conjugate of $(X_m^{mj})'$, and leaving all the other factors unchanged.  This gives an epimorphism $J_m\longrightarrow\overline J_m$. Let $D_m$ be its kernel.  By \cref{lem:factor-quotient}, $D_m$ is a free product of conjugates of the groups $(X_m^{mj})'$, for $j\in J$ with $j>m$, together with a possibly trivial free group.

For every $j\in J$ with $j>m$, $(X_m^{mj})'\leq A_{mj}$. Since $A_{mj}$ has finite normal height, \cref{thm:finite-height} implies that $(X_m^{mj})'$ also has finite normal height.  Hence $(X_m^{mj})'\in\SJ$ by~\cref{lem:heightSJ}.  It follows from \cref{lem:freeproducts} that $D_m\in\SJ$.

The non-trivial factor groups of $\overline J_m$ are now, up to conjugacy,
\[
C_2,\qquad
X_m^{mj}/(X_m^{mj})'\quad(j\in J,\ j>m),\qquad
W_{im}\quad(i\in J,\ i<m).
\tag{4}\label{eq:Jmbar-factors}
\]
The first two kinds are elementary abelian $2$-groups.  Moreover, in each factor $X_m^{mj}/(X_m^{mj})'$, the non-identity elements of the distinct non-trivial amalgamated subgroups are linearly independent; for a factor of order $2$ this condition is immediate.

Remove temporarily all factors conjugate to one of the groups $W_{im}$, $i<m$, together with the amalgamations involving those factors.  The remaining factors split into maximal subfamilies in which any two factors are connected by a sequence of the remaining amalgamations; denote these subfamilies by $\mathcal C_\lambda$. Every factor in $\mathcal C_\lambda$ is elementary abelian, and every amalgamated subgroup occurring between factors of $\mathcal C_\lambda$ is trivial or has order $2$. Clearly, we may first perform all the amalgamations belonging to a fixed component $\mathcal C_\lambda$. Denote the resulting group by $H_\lambda$.  In applying \cref{lem:abelian-compression}, we also specify the subgroups of order $2$ through which the factors belonging to $\mathcal C_\lambda$ are amalgamated with the factors removed above.  The required independence condition holds by the preceding paragraph. We therefore obtain an epimorphism
\[
\pi_\lambda:H_\lambda\longrightarrow L_\lambda
\]
such that $L_\lambda$ is elementary abelian and $\ker\pi_\lambda$ is free.  Moreover, every specified subgroup of order $2$ embeds in $L_\lambda$, and, after repetitions are removed, the non-identity elements of their distinct images are linearly independent.

For each $\lambda$, replace the factor $H_\lambda$ by $L_\lambda$ via the epimorphism $\pi_\lambda:H_\lambda\longrightarrow L_\lambda$, and leave every factor conjugate to some $W_{im}$ unchanged.  If $C\leq H_\lambda$ is an amalgamated subgroup through which $H_\lambda$ is joined to one of the remaining factors, replace $C$ on the $H_\lambda$ side by its image $\pi_\lambda(C)\leq L_\lambda$. All amalgamations between factors conjugate to the groups $W_{im}$ are left unchanged.  Denote by $\widehat J_m$ the iterated amalgam obtained in this way.

This construction is well defined.  Indeed, by the preceding application of \cref{lem:abelian-compression}, every amalgamated subgroup $C\leq H_\lambda$ through which $H_\lambda$ is joined to a remaining factor embeds under $\pi_\lambda$.  Hence $\ker\pi_\lambda\cap C=\{1\}$. For a factor conjugate to some $W_{im}$ we take the trivial normal subgroup.  Therefore the hypotheses of \cref{lem:factor-quotient} are satisfied, with $\ker\pi_\lambda\trianglelefteq H_\lambda$ on the factors $H_\lambda$ and with the trivial subgroup on all the remaining factors.  We obtain a natural epimorphism $\overline J_m\longrightarrow\widehat J_m$.

By \cref{lem:factor-quotient}, its kernel is a free product of conjugates of the groups $\ker\pi_\lambda$, together with a possibly trivial free group.  Since every $\ker\pi_\lambda$ is free, the kernel of $\overline J_m\longrightarrow\widehat J_m$ is free. The factors of $\widehat J_m$ are, up to conjugacy, the groups $L_\lambda$ and the groups $W_{im}$, for $i\in J$ with $i<m$.  Each $L_\lambda$ is an elementary abelian $2$-group.  Every amalgamated subgroup of $\widehat J_m$ is trivial or has order $2$. Moreover, the independence condition holds in every $L_\lambda$. Indeed, in applying \cref{lem:abelian-compression} to~$H_\lambda$, we specified precisely the subgroups of order $2$ through which~$H_\lambda$ is joined to the remaining factors.  The lemma shows that two of their images in $L_\lambda$ are equal only when this equality is forced by a chain of the amalgamations used in forming $H_\lambda$, and that, after equal images are identified, their non-identity elements are linearly independent.  These distinct images are exactly the distinct non-trivial amalgamated subgroups contained in $L_\lambda$ in the decomposition of $\widehat J_m$.

It remains to consider the factors $W_{im}$.  For every $i\in J$ with $i<m$,
\[
X_i^{im}\trianglelefteq N_{im}\leq A_{im},
\qquad
W_{im}=N_{im}/X_i^{im}.
\]
Since $A_{im}$ has finite normal height, \cref{thm:finite-height} implies that $N_{im}$ has finite normal height, and then that its factor group $W_{im}$ has finite normal height.  There are only finitely many indices $i<m$, so there exists $h<\omega$ such that $W_{im}\in\Hclass_h$ for every $i\in J$ with $i<m$. Thus~$\widehat J_m$ satisfies the hypotheses of \cref{thm:amalgam}, and hence $\widehat J_m\in\SJ$.

The kernel of $\overline J_m\to\widehat J_m$ is free and hence belongs to $\SJ$. Since $\SJ$ is closed under extensions, it follows that $\overline J_m\in\SJ$. Again using closure of $\SJ$ under extensions, this gives $J_m\in\SJ$. Finally,
\[
J_m\trianglelefteq K_m
\qquad\text{and}\qquad
K_m/J_m\ \text{is free}.
\]
Therefore $K_m\in\SJ$.
\end{proof}

We can now prove that every $G_S$ is $\SJ$.  Put
\[
R_\omega=\bigcup_{m<\omega}R_m.
\]
Each $R_m$ is normal in $G_S$, and therefore $R_\omega\trianglelefteq G_S$.  Moreover, $R_{m+1}/R_m= K_m\in\SJ$ for every $m$. For each $m$, choose an $\SJ$-series of $R_{m+1}/R_m$ and take its inverse image in $R_{m+1}$.  In this way we obtain a complete general abelian series between $R_m$ and $R_{m+1}$.  Place these series one after another, in increasing order of $m$, and adjoin all the groups $R_m$ and their union $R_\omega$. The resulting chain is complete.  Every jump has abelian factor, and a term lying between $R_m$ and $R_{m+1}$ has finite subnormal defect in $R_{m+1}$, and $R_{m+1}\trianglelefteq R_\omega$. Therefore $R_\omega\in\SJ$.

Now, every $g_i$ is trivial in $G_S/R_\omega$, so  all the prescribed amalgamated subgroups are therefore trivial in this quotient, and the image of $A_{ij}$ is $A_{ij}/N_{ij}$.  Hence
\[
G_S/R_\omega
\simeq
*_{\substack{i,j\in J\\ i<j}} A_{ij}/N_{ij}.
\]
Each group $A_{ij}/N_{ij}$ has finite normal height by \cref{thm:finite-height}, and hence belongs to $\SJ$ by~\cref{lem:heightSJ}.  Therefore $G_S/R_\omega\in\SJ$ by \cref{lem:freeproducts}.  Since $R_\omega\in\SJ$, so $G_S\in\SJ$.

\begin{proposition}\label{prop:countable}
Every countable subgroup of $G$ belongs to $\SJ$.
\end{proposition}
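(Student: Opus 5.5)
The statement follows by combining \cref{lem:support} with the argument given just before it, which shows that $G_S\in\SJ$ for every countable set $S\subseteq\omega_1$. So the plan is: let $C$ be a countable subgroup of $G$. By \cref{lem:support} there is a countable set $S\subseteq\omega_1$ with $C\leq G_S$. Since $G_S\in\SJ$ and the class $\SJ$ is subgroup-closed, we get $C\in\SJ$. For the subgroup closure, intersect the terms of an $\SJ$-series of $G_S$ with $C$. Intersection preserves completeness and abelian jumps. It also preserves finite subnormal defect: intersecting a subnormal chain $H=H_d\trianglelefteq\dots\trianglelefteq H_0=G_S$ with $C$ gives a chain of the same length in $C$. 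Finally, repeated terms can be discarded.

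The fact that $G_S\in\SJ$ is itself assembled in three steps, which I would recall in order. First, the normal subgroups $R_m$ give successive kernels $K_m=R_{m+1}/R_m$, each in $\SJ$ by the preceding lemma. Second, the union $R_\omega$ lies in $\SJ$: concatenate lifted $\SJ$-series of the $K_m$ to obtain a complete general abelian series. Third, $G_S/R_\omega$ is a free product of groups $A_{ij}/N_{ij}$ of finite normal height. This quotient is in $\SJ$ by \cref{lem:heightSJ} and \cref{lem:freeproducts}, and closure under extensions finishes.

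The one point I expect to need care is the concatenation giving $R_\omega\in\SJ$. Its terms must have finite defect in $R_\omega$, and since $\SJ$ is subgroup-closed they should really be controlled in $G_S$. A term $T$ between $R_m$ and $R_{m+1}$ is the preimage of a term of finite defect $d$ in $K_m$. The preimage preserves the defect, so $T$ has defect at most $d$ in $R_{m+1}$. Since $R_{m+1}\trianglelefteq G_S$, the defect of $T$ in $G_S$ is at most $d+1$, hence also finite in $R_\omega$. The terms $R_m$ and $R_\omega$ are normal. Completeness of the combined chain is clear: a supremum or infimum of a family either lies within one interval $[R_m,R_{m+1}]$ or is one of the $R_m$ or $R_\omega$. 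Once this is checked, closure under extensions gives $G_S\in\SJ$, and the reduction in the first paragraph completes the proof.
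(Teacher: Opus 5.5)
Your proposal is correct and follows the paper's proof: reduce to $G_S$ via \cref{lem:support}, use $G_S\in\SJ$ (obtained from the filtration by the $R_m$, their union $R_\omega$, and the free-product quotient $G_S/R_\omega$), and conclude by subgroup closure of $\SJ$. Your extra checks --- intersecting an $\SJ$-series with $C$, and bounding the defect of the lifted terms between $R_m$ and $R_{m+1}$ by $d+1$ --- spell out what the paper states in one line each.
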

   
\begin{proof}
Let $C\leq G$ be countable.  By \cref{lem:support}, there is a countable set $S\subseteq\omega_1$ such that $C\leq G_S$.  We have just proved that $G_S\in\SJ$.  Hence $C\in\SJ$.
\end{proof}

\section*{Acknowledgements}
The authors are members of the non-profit association ``AGTA -- Advances in Group Theory and Applications'' (www.advgrouptheory.com) and are supported by GNSAGA (INdAM).

During the preparation of this manuscript, the authors used OpenAI's ChatGPT as an interactive tool for checking mathematical arguments, exploring proof strategies, and improving the exposition. The statements, arguments, and proofs presented in the paper remain primarily the work of the authors, with ChatGPT used in a supporting role. The authors take full responsibility for the content.

\end{document}